\theoremstyle{plain}
  \newtheorem{Thm}{Theorem}[section] 
  \newtheorem{Lma}[Thm]{Lemma} 
  \newtheorem{Cor}[Thm]{Corollary} 
  \newtheorem{Prop}[Thm]{Proposition}
\theoremstyle{definition}
  \newtheorem{Def}[Thm]{Definition}
  \newtheorem{Dis}[Thm]{Discussion}
\theoremstyle{remark}
  \newtheorem{Rem}[Thm]{Remark}
\newcommand{\mlabel}[1]%
  {\mbox{}\marginpar{\raggedleft\hspace{0pt}{\rm\ttfamily#1}}\label{#1}}
\newcommand{\rank}{\operatorname{rank}}
\newcommand{\R}[1]{{R^{({#1})}}}
\newcommand{\M}[1]{{M^{({#1})}}}
\newcommand{\fm}{{\mathfrak m}}
\newcommand{\ringR}{(R,\fm,k)} 
\newcommand{\cx}{\operatorname{cx}}
\newcommand{\Abs}[1]{\Vert #1 \Vert}
\newcommand{\sC}{\mathscr{C}}
\newcommand{\sR}{\mathscr{R}}
\newcommand{\sF}{\mathscr{F}}
\newcommand{\sV}{\mathscr{V}}
\newcommand{\segre}{\,\sharp\,} 
\newcommand{\ol}{\overline}
\newcommand{\floor}[1]{\left\lfloor{#1}\right\rfloor}
\newcommand{\trun}{\vert}
\providecommand{\card}[1]{\left\lVert{#1}\right\rVert}
\providecommand{\abs}[1]{\left\lvert{#1}\right\rvert}
\providecommand{\rank}{\operatorname{rank}}
\renewcommand{\ge}{\geqslant} \renewcommand{\le}{\leqslant} 
\renewcommand{\geq}{\geqslant} \renewcommand{\leq}{\leqslant} 
\newcounter{hours}\newcounter{minutes}
\newcommand{\excise}[1]{}
\begin{document}

\title[On the Frobenius complexity of determinantal rings]
{\bf On the Frobenius complexity of determinantal rings}
\author[Florian~Enescu]{Florian Enescu}
\author[Yongwei~Yao]{Yongwei Yao}
\address{Department of Mathematics and Statistics, Georgia State
  University, Atlanta, GA 30303 USA} 
\email{fenescu@gsu.edu}
\email{yyao@gsu.edu}
\subjclass[2010]{Primary 13A35}

\date{}

\begin{abstract}
We compute the Frobenius complexity for the determinantal ring of
prime characteristic $p$ obtained by
modding out the $2 \times 2$ minors of an $m \times n$ matrix of
indeterminates, where $m > n \ge 2$. 
We also show that, as $p \to \infty$, the Frobenius complexity
approaches $m-1$.
\end{abstract}

\maketitle

\section{Introduction}

\subsection{Notations} Throughout this paper $R$ is a commutative Noetherian ring, often local, of prime characteristic $p$. Let $q=p^e$, where $e \in \mathbb{N}  = \{ 0, 1, \ldots \}$. Consider the $e$th Frobenius homomorphism $F^e:R\to R$ defined $F(r)=r^q$, for all $r \in R$. For an $R$-module $M$, an $e$th Frobenius action (or Frobenius operator) on $M$ is an additive map $\phi: M \to M$ such that $\phi(rm)= r^{p^e}\phi(m)$, for all $r \in R, m \in M$. For any $e \geq 0$, we let $R^{(e)}$ be the $R$-algebra defined as
follows: as a ring $\R{e}$ equals $R$ while the $R$-algebra structure
is defined by $r \cdot s = r^{q} s$, for all $r \in R,\, s \in
R^{(e)}$.  Also,
$\R{e}$ as an $\R{e}$-algebra is simply $R$ as an $R$-algebra. Similarly, for an $R$-module $M$, we can define
a new $R$-module structure on $M$ by letting $r * m = r^{p^e}m$, for all $r\in R, m\in M$. We denote this
$R$-module by $\M{e}$.  

Consider now an $e$th Frobenius action, $\phi: M \to M$, on $M$, which
is no other than an $R$-module homomorphism $\phi : M \to \M{e}$. Such
an action naturally defines an $R$-module homomorphism $f_{\phi}:
\R{e} \otimes_R M \to M$, where $f_{\phi} (r \otimes m) = r \phi(m)$,
for all $r \in R, m \in M$. Here, $\R{e}$ has the usual structure
(i.e., without twisting) as
an $R$-module given by $\R{e}=R$ on the left, while on the right we
have the twisted module structure via the Frobenius action.

Let $\sF^e(M)$ be the collection of all $e$th Frobenius operators on
$M$. The $R$-module structure on $\sF^e{(M)}$ is given by viewing
$\M{e}$ as an $R$-module without twisting, that is,
$(r\phi)(x) = r\phi(x)$ for every $r \in R,\, \phi  \in \sF^e{(M)}$
and $x \in M$. 

\begin{Def}\label{def:sfe} 
We define {\it the algebra of Frobenius operators} on $M$ by
$$\sF{(M)} = \oplus_{e\geq 0} \sF^e{(M)},$$
with the multiplication on $\sF{(M)}$ determined by composition of
functions; that is, if $\phi \in \sF^e{(M)}, \psi \in \sF^{e'}{(M)}$
then $\phi \psi:=\phi\circ \psi \in \sF^{e+e'}(M)$. Hence, in
general, $\phi \psi \neq \psi \phi$.
\end{Def}

Note that $\sF^0{(M)} = {\rm End}_R(M)$, which is a subring of
$\sF(M)$. Naturally, each $\sF^e{(M)}$ is a module over
$\sF^0{(M)}$. Since $R$ maps canonically to $\sF^0{(M)}$, this makes
$\sF^e{(M)}$ an $R$-module by restriction of scalars. Note that $(\phi
\circ r)(m) = \phi(rm) = (r^q\phi)(m)$, for all $r \in R, m \in
M$. Therefore, $\phi r = r^q \phi$, for all $r\in R, \, \phi \in
\sF^e{(M)}$, $q=p^e$.

\subsection{The Frobenius Complexity}

The main concept studied in this paper is the Frobenius complexity of
a local ring $R$, which was introduced in~\cite{EY}. In fact, the
results in this subsection, if not referenced otherwise, are taken
from \cite{EY}. We first need to review the 
definition of the complexity of a graded ring. 

\begin{Def}Let $A= \oplus_{e\geq 0} A_e$ be a $\mathbb{N}$-graded ring, not
  necessarily commutative. 
\begin{enumerate}
\item Let $G_e(A)= G_e$ be the subring of $A$ generated by the elements of
  degree less or equal to $e$. (So $k_0 = 0$.) 
We agree that $G_{-1} = A_0$.

\item We use $k_e=k_e(A)$ to denote the minimal number of homogeneous
  generators of $G_e$ as a subring of $A$ over $A_0$. We say that $A$
  is {\it degree-wise finitely generated} if $k_e < \infty$ for all
  $e$. We agree that $k_{-1} = 0$.

\item For a degree-wise finitely generated ring $A$, we say that a set
  $X$ of homogeneous elements of $A$ minimally generates $A$ if for all
  $e$, $X_{\leq e} =\{ a \in X: deg(a) \leq e \}$ is a minimal set of
  generators for $G_e$ with $k_e = |X_{\leq e} |$ for every $e \ge 0$. Also,
  let $X_e= \{ a \in X: deg(a)=e \}$. 
\end{enumerate}
\end{Def}

\begin{Prop} \label{prop:minimal-gen}
With the notations introduced above,  
let $X$ be a set of homogeneous elements of $A$. Then
\begin{enumerate}
\item The set $X$ generates $A$ as a ring over $A_0$ if and only if 
$X_{\le e}$ generates $G_e$ as a ring over $A_0$ for all $e \ge 0$ 
if and only if the image of $X_{e}$ generates
$\frac{A_{e}}{(G_{e-1})_{e}}$ as an $A_0$-bimodule for all $e \ge 0$.  
\item Assume that $A$ is degree-wise finitely generated
  $\mathbb{N}$-graded ring and $X$ generates $A$ as a ring over $A_0$. 
The set $X$ minimally generates $A$ as a ring over $A_0$ if and
  only if $|X_{e}|$ is the minimal number of generators (out of all
 homogeneous generating sets) of $\frac{A_{e}}{(G_{e-1})_{e}}$ as an
  $A_0$-bimodule for all $e \ge 0$.  
\end{enumerate}
\end{Prop}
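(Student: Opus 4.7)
The plan is to prove Part (1) first, then extract from it an additivity formula $k_e = k_{e-1} + \mu_e$ (writing $\mu_e$ for the minimal number of $A_0$-bimodule generators of $A_e/(G_{e-1})_e$), from which Part (2) is bookkeeping.

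For Part (1), I would go (a) $\Rightarrow$ (b) $\Rightarrow$ (c) $\Rightarrow$ (a). The implication (a) $\Rightarrow$ (b) is the usual observation for $\mathbb{N}$-graded rings: writing any element of $G_e$ as a noncommutative polynomial in $X$ with coefficients in $A_0$ and picking off homogeneous components of degree $\le e$ shows that only factors from $X_{\le e}$ are needed. For (b) $\Rightarrow$ (c), if $X_{\le e}$ generates $G_e$ over $A_0$, then $A_e = (G_e)_e$ is spanned as an $A_0$-bimodule by degree-$e$ monomials in $X_{\le e}$; a degree count shows that such a monomial either has all $X$-factors in $X_{\le e-1}$ (hence lies in $(G_{e-1})_e$) or has the shape $a_0 x a_1$ with $x \in X_e$ and $a_0, a_1 \in A_0$. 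For (c) $\Rightarrow$ (b) I would induct on $e$: the inductive hypothesis places $A_d \subseteq G_{e-1}$ inside the subring generated by $X_{\le e-1} \subseteq X_{\le e}$ for $d < e$; for $A_e$, any element differs from an $A_0$-bimodule combination of $X_e$ by an element of $(G_{e-1})_e$, and both lie in the subring generated by $X_{\le e}$. Then (b) $\Rightarrow$ (a) is immediate since $A = \bigcup_e G_e$.

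For Part (2), I would first establish the additivity $k_e = k_{e-1} + \mu_e$. The inequality $\le$ follows by taking a minimal homogeneous ring generating set $Y$ of $G_{e-1}$ together with a lift $\tilde Z \subseteq A_e$ of a minimal $A_0$-bimodule generating set of $A_e/(G_{e-1})_e$; the set $Y \cup \tilde Z$ generates $G_e$ by criterion (c) of Part (1) applied to $G_e$ itself, since at every degree the corresponding quotient is generated. For the reverse direction $\ge$, given any minimal homogeneous ring generating set $X'$ of $G_e$ of size $k_e$, Part (1) applied to $G_e$ shows that $X'_{\le e-1}$ generates $G_{e-1}$ and that the image of $X'_e$ generates $A_e/(G_{e-1})_e$, whence $|X'_{\le e-1}| \ge k_{e-1}$ and $|X'_e| \ge \mu_e$.

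Part (2) then drops out: since $X$ is assumed to generate $A$, Part (1) forces $X_{\le e}$ to generate $G_e$ for all $e$ and $|X_e| \ge \mu_e$ for all $e$. Hence $X$ is minimal, i.e., $|X_{\le e}| = k_e$ for all $e$, if and only if $|X_e| = k_e - k_{e-1} = \mu_e$ for every $e$, using the additivity together with $k_{-1} = 0 = |X_{\le -1}|$ and induction on $e$. The only point requiring care is that $A_e/(G_{e-1})_e$ is an honest $A_0$-bimodule rather than a one-sided module, since elements of $X_e$ need not commute with $A_0$; once both left and right coefficients are carried through consistently, the rest is standard graded algebra.
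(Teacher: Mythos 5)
Your proof is correct, and since the paper states Proposition~\ref{prop:minimal-gen} without proof (it is imported from \cite{EY}), there is no in-paper argument to compare against; your route --- the three-way equivalence in (1) by degree bookkeeping, then the additivity $k_e = k_{e-1} + \mu_e$ obtained by splicing a size-$k_{e-1}$ generating set of $G_{e-1}$ with lifts of a minimal bimodule generating set, and conversely applying (1) to a size-$k_e$ generating set of $G_e$ --- is the natural one and recovers exactly Corollary~\ref{cor:minimal-gen}(1) as the intermediate step, consistent with how the paper organizes these statements. One phrase to tighten: in (a)$\Rightarrow$(b) an element of $G_e$ may have homogeneous components of degree $>e$, so instead of ``picking off components of degree $\le e$'' of a given element, argue that each homogeneous element of degree $d\le e$ is a sum of words in $X$ with $A_0$-coefficients of degree exactly $d$ (hence with all $X$-factors in $X_{\le e}$), which suffices because these elements generate $G_e$; this is a matter of wording, not a gap.
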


\begin{Cor} \label{cor:minimal-gen}
Let $A$ be a degree-wise finitely generated $\mathbb{N}$-graded ring
and $X$ a set of homogeneous elements of $A$. Then
\begin{enumerate}
\item The minimal number of generators of $\frac{A_{e}}{(G_{e-1})_{e}}$ as an
  $A_0$-bimodule is $k_e - k_{e-1}$ for all $e \ge 0$. 
\item If $X$ is generates $A$ as a ring over $A_0$ then 
$|X_e| \ge k_e - k_{e-1}$ for all $e \ge 0$. 
\end{enumerate}
\end{Cor}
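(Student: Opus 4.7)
The plan is to deduce the corollary immediately from Proposition \ref{prop:minimal-gen}, so the real work is just producing a minimal generating set in the first place and then reading off cardinalities. First I would construct such a generating set $X$ degree-by-degree: since $A$ is degree-wise finitely generated, each quotient $A_e/(G_{e-1})_e$ is finitely generated as an $A_0$-bimodule, so for each $e \ge 0$ I choose a finite set $X_e$ of homogeneous degree-$e$ lifts whose images form a minimal $A_0$-bimodule generating set of $A_e/(G_{e-1})_e$. By Proposition \ref{prop:minimal-gen}(1), the union $X = \bigcup_{e\ge 0} X_e$ generates $A$ as a ring over $A_0$, and by part (2) of the same proposition it does so minimally, with $|X_{\le e}| = k_e$ for every $e \ge 0$.

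For part (1), I would simply compare two expressions for $|X_e|$ using this minimal $X$. On the one hand, by the definition of $k_e$ and minimality of $X$ we have $|X_e| = |X_{\le e}| - |X_{\le e-1}| = k_e - k_{e-1}$. On the other hand, Proposition \ref{prop:minimal-gen}(2) tells us that $|X_e|$ equals the minimal number of $A_0$-bimodule generators of $A_e/(G_{e-1})_e$. Equating the two gives (1).

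For part (2), assume $X$ is an arbitrary set of homogeneous elements generating $A$ as a ring over $A_0$. By Proposition \ref{prop:minimal-gen}(1) the image of $X_e$ generates $A_e/(G_{e-1})_e$ as an $A_0$-bimodule, so $|X_e|$ is at least the minimal number of bimodule generators of that quotient, which by (1) equals $k_e - k_{e-1}$.

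The only point requiring care, and the one I would flag as the main (though still minor) obstacle, is ensuring the degree-by-degree construction actually produces a \emph{globally} minimal ring generating set rather than a set that is merely minimal in each degree separately. This is exactly what the ``if and only if'' in Proposition \ref{prop:minimal-gen}(2) delivers, so once that proposition is in hand the corollary is essentially a bookkeeping exercise.
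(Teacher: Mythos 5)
Your argument is correct and is essentially the intended one: the corollary is exactly the immediate consequence of Proposition~\ref{prop:minimal-gen} obtained by building a generating set degree by degree and comparing $|X_{\le e}|=k_e$ with $|X_{\le e-1}|=k_{e-1}$ (the paper, citing \cite{EY}, gives no separate proof because this is the intended deduction). The only wording to tighten is that in your construction you should choose lifts whose images form a generating set of \emph{minimal cardinality} of $A_e/(G_{e-1})_e$ (not merely an inclusion-minimal generating set), since that is what Proposition~\ref{prop:minimal-gen}(2) requires.
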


\begin{Def}
\label{degreewise}
Let $A$ be a degree-wise finitely generated ring.
The sequence $\{k_e\}_e$ is called the {\it growth} sequence for $A$. The {\it complexity} sequence is given by $\{ c_{e}(A)= k_{e}-k_{e-1} \}_{e\geq 0}$.  
The {\it complexity} of $A$ is
$$\inf \{ n \in \mathbb{R}_{> 0}: c_e(A)=k_{e} - k_{e-1} = O(n^e) \}$$ 
and it is denoted by $\cx(A)$. If there is no $n >0$ such that $c_e(A)= O(n^e)$, then we say that $\cx(A)=\infty$.
\end{Def}

\begin{Def}\label{def:nearly-onto}
Let $A$ and $B$ be $\mathbb N$-graded rings and $h \colon A \to B$ be
a graded ring homomorphism. We say that $h$ is \emph{nearly onto} if 
$B = B_0[h(A)]$ (that is, $B$ as a ring is generated by $h(A)$ over $B_0$). 
\end{Def}

\begin{Thm}
\label{thm:nearly-onto}
Let $A$ and $B$ be $\mathbb N$-graded rings that are degree-wise
finitely generated. If there
exists a graded ring homomorphism $h \colon A \to B$ that
is nearly onto, then $c_e(A) \ge c_e(B)$ for all $e \ge 0$.
\end{Thm}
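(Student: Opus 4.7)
The plan is to translate the inequality $c_e(A) \ge c_e(B)$ into a comparison of sizes of minimal homogeneous generating sets via Corollary~\ref{cor:minimal-gen}, and then to transport such a generating set from $A$ to $B$ along $h$.

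First I would produce a homogeneous subset $X \subseteq A$ that minimally generates $A$ as a ring over $A_0$. Since $A$ is degree-wise finitely generated, $X$ can be built one degree at a time: at degree $e$, choose any minimal homogeneous lift of a minimal bimodule generating set of $A_e/(G_{e-1})_e$. By the definition of a minimal generating set and Corollary~\ref{cor:minimal-gen}(1), this gives $|X_e| = k_e(A) - k_{e-1}(A) = c_e(A)$ for every $e \ge 0$. Next I would exploit the nearly onto hypothesis to show that $h(X)$ generates $B$ as a ring over $B_0$: since $A = A_0[X]$, applying $h$ gives $h(A) = h(A_0)[h(X)] \subseteq B_0[h(X)]$, and combining this with $B = B_0[h(A)]$ forces $B = B_0[h(X)]$. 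Because $h$ is a graded homomorphism, $h(X)$ is homogeneous and its degree-$e$ part satisfies $h(X)_e \subseteq h(X_e)$, which yields the crude but crucial bound $|h(X)_e| \le |X_e| = c_e(A)$.

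Finally I would invoke Corollary~\ref{cor:minimal-gen}(2) applied to $h(X)$ inside $B$ to conclude that $|h(X)_e| \ge c_e(B)$ for every $e \ge 0$. Chaining this with the previous inequality gives $c_e(A) \ge c_e(B)$, as required. I do not anticipate a genuine obstacle: the argument is essentially a functoriality statement, saying that a nearly onto graded homomorphism $h \colon A \to B$ induces surjections on the degree-$e$ bimodule quotients $A_e/(G_{e-1}(A))_e \to B_e/(G_{e-1}(B))_e$, which can only lower the minimal number of generators. The only place to be careful is verifying that a generating set $X$ in the precise sense of Proposition~\ref{prop:minimal-gen} actually exists, and this follows directly from the degree-wise finite generation assumption.
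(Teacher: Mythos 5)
Your argument is correct, and since the paper states Theorem~\ref{thm:nearly-onto} as imported from \cite{EY} without reproducing a proof, there is nothing in the text to diverge from: lifting a minimal homogeneous generating set $X$ of $A$ degree by degree (so $|X_e| = c_e(A)$ by Corollary~\ref{cor:minimal-gen}(1)), pushing it forward along the nearly onto map to get a homogeneous generating set of $B$ over $B_0$ with at most $|X_e|$ elements in degree $e$ (after discarding any zero images), and applying Corollary~\ref{cor:minimal-gen}(2) is exactly the intended argument. The only loose point is your closing aside: $h$ does not in general induce a \emph{surjection} $A_e/(G_{e-1}(A))_e \to B_e/(G_{e-1}(B))_e$ of abelian groups, only a map whose image generates the target as a $B_0$-bimodule, but your actual chain of inequalities never uses that aside, so the proof stands.
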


\begin{Def}
Let $A$ be a $\mathbb{N}$-graded ring such that there exists a ring homomorphism $R \to A_0$, where $R$ is a commutative ring. We say that $A$ is a (left) $R$-{\it skew algebra} if $aR \subseteq Ra$ for all homogeneous elements $a \in A$. A right $R$-skew algebra can be defined analogously. In this paper, our $R$-skew algebras will be left $R$-skew algebras and therefore we will drop the adjective `left' when referring it to them.
\end{Def}

\begin{Cor}
\label{interpretation}
Let $A$ be a degree-wise finitely generated $R$-skew algebra such that
$R=A_0$. Then $c_{e}(A)$ equals the minimal number of generators of
$\frac{A_{e}}{(G_{e-1})_{e}}$ as a left $R$-module for all $e$.
\end{Cor}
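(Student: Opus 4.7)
The plan is to deduce the statement almost immediately from Corollary \ref{cor:minimal-gen}, by observing that the $R$-skew condition collapses the $R$-bimodule structure on $\frac{A_e}{(G_{e-1})_e}$ to a left $R$-module structure.

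First, I would apply Corollary \ref{cor:minimal-gen}(1), which tells us that $c_e(A) = k_e - k_{e-1}$ equals the minimal number of generators of $\frac{A_e}{(G_{e-1})_e}$ as an $A_0$-bimodule; since $A_0 = R$, this is the minimal number of generators as an $R$-bimodule. One should check that $(G_{e-1})_e$ is stable under the $R$-action on both sides so that the quotient inherits the bimodule structure; this holds because $G_{e-1}$ is a subring of $A$ containing $R = A_0$, hence $R \cdot (G_{e-1})_e \subseteq (G_{e-1})_e$ and $(G_{e-1})_e \cdot R \subseteq (G_{e-1})_e$.

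Second, I would exploit the skew relation to identify bimodule generation with left-module generation on this quotient. The key observation: for any homogeneous $a \in A_e$ and any $r \in R$, the hypothesis $aR \subseteq Ra$ produces some $r' \in R$ with $ar = r' a$. Consequently, if $\{x_1, \dots, x_n\}$ is an $R$-bimodule generating set, any element of $\frac{A_e}{(G_{e-1})_e}$ can first be written as $\sum_i r_i \bar{x}_i s_i$ with $r_i, s_i \in R$, and then rewritten, by applying the skew identity to each $\bar{x}_i s_i = s_i' \bar{x}_i$, as the left $R$-linear combination $\sum_i r_i s_i' \bar{x}_i$. The converse (left generating sets are automatically bimodule generating sets) is trivial. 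Hence the minimal number of $R$-bimodule generators equals the minimal number of left $R$-module generators, which is the assertion.

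The argument is essentially bookkeeping rather than a substantive obstacle; the only point requiring a moment of care is that the skew relation $ar = r'a$, being an identity in $A_e$ itself, automatically descends to the quotient $\frac{A_e}{(G_{e-1})_e}$ so that the rewriting step is legitimate.
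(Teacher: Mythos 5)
Your argument is correct: Corollary~\ref{cor:minimal-gen}(1) gives $c_e(A)=k_e-k_{e-1}$ as the minimal number of $A_0$-bimodule generators of $A_{e}/(G_{e-1})_{e}$, and the skew relation $aR\subseteq Ra$, applied to (lifts of) the homogeneous elements of $A_e$, converts bimodule generating sets into left $R$-module generating sets and conversely, so the two minimal numbers agree. The paper states this corollary without proof (it is imported from \cite{EY}), and your derivation is exactly the intended route following from the preceding corollary, with the only points needing care --- that $(G_{e-1})_e$ is an $R$-sub-bimodule and that the skew identity descends to the quotient --- handled correctly.
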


We are now in position to state the definition of the Frobenius
complexity of a local ring of prime characteristic.

\begin{Def} \label{def-FCX}
Let $\ringR$ be a local ring of prime characteristic $p$. 
We define the {\it Frobenius complexity} of the ring $R$ by 
$$\cx_F(R) = \log _p (\cx(\sF{(E)})).$$
Also, denote $k_e(R) : = k_e (\sF{(E)})$, for all $e$, and call these
numbers the {\it Frobenius growth sequence} of $R$. Then $c_e= c_e(R):
= k_{e}(R)-k_{e-1}(R)$ defines the {\it Frobenius complexity sequence}
of $R$. If the Frobenius growth sequence of the ring $R$ is eventually
constant (i.e., $\cx(\sF{(E)}) = 0$), then the Frobenius complexity of
$R$ is set to be $-\infty$. 
If $\cx(\sF(E)) = \infty$, the Frobenius complexity if $R$ is set to
be $\infty$. 
\end{Def}

Katzman, Schwede, Singh and Zhang have introduced an important $\mathbb{N}$-graded ring in their paper~\cite{KSSZ}, which is an example of an $R$-skew algebra. We will study the complexity of this skew-algebra in this section, and apply these results to the complexity of the ring $R$ in subsequent sections.

\begin{Def}[\cite{KSSZ}]
Let $\sR$ be an $\mathbb{N}$-graded commutative ring of prime
characteristic $p$ with $\sR_0 =R$.  
Define $T(\sR):= \oplus_{e\geq 0} \sR _{p^e-1}$, which is an
$\mathbb{N}$-graded ring by  
$$ a *b = ab^{p^e}$$
for all $a \in\sR _{p^e-1} ,\, b\in \sR _{p^{e'}-1}$.
The degree $e$ piece of $T(\sR)$ is $T_e(\sR)=\sR _{p^e-1}$. 
\end{Def}

A number of results have been proved about the Frobenius complexity of
a local ring and they are summarized below. 

\begin{Thm}[\cite{EY}, Corollary 2.12, Theorems 4.7, 4.9]
Let $\ringR$ be a local ring. 
\begin{enumerate}
\item
If $R$ is $0$-dimensional then $\cx_F(R) = -\infty$.
\item
If $R$ is normal, complete and has dimension at most two, then
$\cx_F(R) \leq 0$. 
\item
If $R$ is normal, complete and has a finitely geneated anticanonical
cover, then $\cx_F(R) <\infty$. 
\end{enumerate}
\end{Thm}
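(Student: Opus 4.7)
My plan is to use the identification, valid for complete local normal $R$, of $\sF(E)$ with the ring $T(\sR)$ from the Katzman--Schwede--Singh--Zhang construction, where $\sR$ is the anticanonical cover $\bigoplus_{n \ge 0} \hom_R(R(nK_R), R)$. Under this identification $\sF^e(E) \cong \sR_{p^e - 1}$ as $R$-modules and the composition product matches the $*$-product, so $k_e(R) = k_e(T(\sR))$, and all three parts reduce to controlling the number of new $R$-module generators that $T(\sR)_e$ requires over the subring generated in $T$-degrees $< e$; by Corollary \ref{interpretation} this counts the minimal $R$-module generators of $\sR_{p^e - 1}/(G_{e-1})_{p^e-1}$.

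For (1), if $\dim R = 0$ then $R$ is Artinian, so $\fm^{[p^e]} = 0$ for $e \gg 0$, which forces $R^{(e)}$ to be annihilated by $\fm$ and makes it a finite-dimensional $k$-vector space. A direct Matlis-dual computation then shows that $\sF^e(E)$ stabilizes as an $R$-module for $e \gg 0$; hence $c_e = 0$ eventually, which gives $\cx(\sF(E)) = 0$ and $\cx_F(R) = -\infty$ by convention. For (3), if $\sR$ is finitely generated as an $R$-algebra, say by homogeneous elements in $\sR$-degrees at most $N$, then one obtains a graded \emph{nearly onto} homomorphism from an $R$-skew polynomial ring $A$ in these generators onto $T(\sR)$. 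Counting monomials in $A$ whose underlying $\sR$-degree equals $p^e - 1$ yields a polynomial-in-$p^e$ upper bound on $c_e(A)$, and Theorem \ref{thm:nearly-onto} transfers this bound to $T(\sR)$, producing $\cx_F(R) < \infty$.

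Part (2) is the subtlest. In dimension at most two, I would invoke the classical structure theory of rank-one reflexive (divisorial) modules on a normal complete local ring together with the fact that intersection-theoretic data in dimension two is controlled by a finitely generated divisor class group, in order to show that the number of new $R$-module generators needed for $T(\sR)_e$ over the subring generated in lower $T$-degrees remains uniformly bounded as $e$ grows. This bounds $c_e = O(1)$, whence $\cx(\sF(E)) \le 1$ and $\cx_F(R) \le 0$. The main obstacle lies precisely here: the anticanonical cover need not be Noetherian even in dimension two, so one cannot simply apply (3), and instead must exhibit enough multiplicative relations among the pieces $\sR_{p^e - 1}$ to keep the count of new generators bounded, likely via a careful analysis of divisor classes and an Ehrhart-type counting argument on a rational polytope of representatives. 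The other two parts become essentially formal once the identification $\sF(E) \cong T(\sR)$ and the Nearly-Onto Theorem are in hand.
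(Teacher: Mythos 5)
The paper itself gives no proof of this statement (it is quoted from [EY]), so your attempt must stand on its own; your overall framework --- identifying $\sF(E)$ with $T(\sR)$ for the anticanonical cover $\sR$ via [KSSZ] and then controlling the number of new generators in each degree --- is indeed the right starting point, but the proposal has genuine gaps. For part (2) you concede that the crucial boundedness step is unresolved, and the tool you propose to supply it is unavailable: the divisor class group of a two-dimensional normal complete local ring need \emph{not} be finitely generated (for the completed cone over an elliptic curve, $\operatorname{Cl}$ contains the points of $\operatorname{Pic}^0$ of the curve), so an argument ``controlled by a finitely generated divisor class group'' cannot work in general. What is actually needed is a generation statement for the pieces $\sR_{p^e-1}$ modulo products of lower pieces in dimension at most two, which is the entire substance of [EY, Theorem 4.7] and is not supplied here.

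Parts (1) and (3) also have flaws as written. In (1), the inference ``$\sF^e(E)$ stabilizes as an $R$-module for $e \gg 0$, hence $c_e = 0$ eventually'' is a non sequitur: by Corollary~\ref{interpretation}, $c_e$ counts generators of $\sF^e(E)$ modulo the image of composites of lower-degree operators, and an abstract isomorphism of the graded pieces says nothing about that image; one must actually show (using $\fm^{[p^e]}=0$, so that operators factor through $E/\fm E$) that for $e \gg 0$ every $e$-th operator is an $\sF^0(E)$-combination of composites from lower degrees. In (3), a nearly onto graded homomorphism from an $R$-skew polynomial ring on finitely many variables onto $T(\sR)$ cannot exist in general, since it would force $T(\sR)$ to be a finitely generated $T_0$-algebra, contradicting Theorem~\ref{thm:mr}(2) (where $\sR = V_r(K[x_1,\dotsc,x_m])$ is finitely generated but $T(\sR)$ is not); moreover the algebra generators of $\sR$ live in degrees $\le N$, not of the form $p^e-1$, so they are not even elements of $T(\sR)$. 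The correct implementation of your idea is to map a graded polynomial ring $A$ nearly onto $\sR$, apply Lemma~\ref{lma:nearly-onto}(2) and Corollary~\ref{cor:nearly-onto} to get $c_e(T(\sR)) \le c_e(T(A)) \le \rank_R A_{p^e-1}$, and observe that this rank is polynomial in $p^e$, which gives $\cx_F(R) < \infty$.
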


In addition the following holds.

\begin{Thm}[\cite{KSSZ} Proposition 4.1 and \cite{EY} Theorem 4.5]
If $\ringR$ is normal and $\mathbb{Q}$-Gorenstein, then the order of its canonical module in the divisor class group is relatively prime to $p$ if and only if $\cx_F(R) =-\infty$.
\end{Thm}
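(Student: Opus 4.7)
The plan is to reduce the statement, via the identification due to Katzman--Schwede--Singh--Zhang, to a question about generation of the graded ring $T(\sS)$ associated to the anticanonical cover $\sS := \bigoplus_{n \geq 0} \omega_R^{(-n)}$. After passing to the completion of $R$ (which leaves $\cx_F(R)$ unchanged) and using that $R$ is normal, one obtains $\sF^e(\E) \cong \omega_R^{(1-p^e)} = \sS_{p^e-1}$ as $R$-modules, in a way that respects the graded ring structures, so that $\sF(\E)$ is isomorphic to $T(\sS)$ (up to taking opposites). By definition, $\cx_F(R) = -\infty$ means $\cx(\sF(\E)) = 0$, which, since the growth sequence is nondecreasing, is equivalent to $T(\sS)$ being a finitely generated $R$-algebra. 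The $\mathbb{Q}$-Gorenstein hypothesis ensures that $\sS$ itself is finitely generated over $R$, and if $r$ denotes the order of $[\omega_R]$ in $\operatorname{Cl}(R)$, then the $r$-th Veronese subring $\sS^{(r)}$ is a polynomial ring $R[t]$, where $t \in \sS_r$ corresponds to an isomorphism $\omega_R^{(-r)} \cong R$.

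For the forward direction, assume $\gcd(r, p) = 1$. Then $p$ is a unit modulo $r$, so the residues $s_e := (p^e - 1) \bmod r$ are periodic in $e$; in particular there is some $e_0$ with $s_{e_0} = 0$, hence $\sS_{p^{e_0}-1}$ contains a power $t^m$ of the unit $t$. Combining this with a finite generating set of $\sS$ over $R$ concentrated in degrees less than $r$, and using that the twisted multiplication $a * b = a b^{p^e}$ in $T(\sS)$ recovers ordinary multiplication in $\sS$ after a Frobenius rescaling of $b$, one checks that every element of $\sS_{p^e-1}$ for large $e$ can be written as a twisted product of finitely many generators drawn from a bounded range of degrees of $T(\sS)$. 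Hence $T(\sS)$ is finitely generated, giving $\cx_F(R) = -\infty$.

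For the converse, suppose $p \mid r$. The main obstacle is to exhibit, for infinitely many $e$, an element of $T(\sS)_e = \sS_{p^e - 1}$ not contained in the subring of $T(\sS)$ generated by $\bigcup_{e' < e} T(\sS)_{e'}$. The key arithmetic observation is that $p \mid r$ forces $r \nmid p^e - 1$ for every $e \geq 1$ (since $p^e - 1 \equiv -1 \pmod p$), so $\sS_{p^e-1}$ never reduces to a piece of the polynomial Veronese $R[t]$ and always contains genuinely "fractional" elements relative to the $\mathbb{Z}/r$-grading of $\sS$. One then has to track precisely where the images of twisted products $a_1 * \cdots * a_k$ land within the graded piece $\sS_{p^e-1}$: because each factor is Frobenius-scaled before multiplication, these products occupy a proper $R$-submodule of $\sS_{p^e-1}$ at infinitely many levels, forcing a new generator at each such level. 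Making this exclusion effective, and extracting the required growth of the complexity sequence $c_e$, is the technical heart of the argument, and is the step I expect to be hardest.
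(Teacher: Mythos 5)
This theorem is not proved in the paper at all: it is quoted as a known result, with the two implications attributed to \cite{KSSZ} (Proposition 4.1) and \cite{EY} (Theorem 4.5) respectively, so there is no in-paper argument to compare against. Judged on its own terms, your forward direction (order prime to $p$ implies $\cx_F(R)=-\infty$) follows the standard KSSZ line and is plausible in outline, though it silently uses the key surjectivity fact that multiplying $\sS_{p^e-1}$ by the $p^e$-th power of a generator of the free piece $\sS_{p^{e_0}-1}$ fills up all of $\sS_{p^{e+e_0}-1}$; that step needs the reflexivity/divisor-class argument and should be stated, not just the periodicity of $p^e-1 \bmod r$.

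The genuine gap is the converse: you never prove that $p \mid r$ forces $\cx_F(R) \neq -\infty$, and you say explicitly that this ``technical heart'' is the step you expect to be hardest. Moreover, the one concrete observation you offer for it --- that $r \nmid p^e-1$ for all $e$, so $\sS_{p^e-1}$ is never a piece of the Veronese $R[t]$ --- does not by itself obstruct generation in $T(\sS)$. A twisted product lands in $\sS_{p^{e'}-1}\cdot\bigl(\sS_{p^{e''}-1}\bigr)^{[p^{e'}]} \subseteq \sS_{p^{e'}-1}\cdot \sS_{(p^{e''}-1)p^{e'}}$, and the untwisted degree $(p^{e''}-1)p^{e'}$ \emph{can} be divisible by $r$ when $p \mid r$ (already for $r=p$ it always is), so the ``fractional class'' heuristic based on degrees modulo $r$ does not rule out surjectivity. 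What actually has to be shown is that the image of the Frobenius-power map $b \mapsto b^{p^{e'}}$ spans only a proper $R$-submodule of $\sS_{(p^{e''}-1)p^{e'}}$ in a way that survives multiplication into $\sS_{p^e-1}$, for infinitely many $e$; this is precisely the content of \cite{EY} Theorem 4.5 and is absent from your proposal. Since the statement is an equivalence, the argument as written establishes at most one direction.
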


As in \cite{EY}, we will also use the following notations and
terminologies in the sequel: 
For an integer $a \in \mathbb N$, if $a= c_{n}p^{n} + \cdots + c_1p +
c_0$ with $0 \leq c_i \leq p-1$ for all $0 \le i \le n$,
then we use $a = \overline{c_{n} \cdots c_0}$ to denote the base $p$
expression of $a$. 
Also, we write $a \trun_e$ to denote the remainder of $a$
when dividing to $p^e$. Thus, if $a = \ol{c_{n} \cdots c_0}$
then $a \trun_e = \overline{c_{e-1} \cdots c_0}$, 
which we refer to as the $e$th truncation of $a$.
Put differently, $a \trun_e = a-\floor{\frac a{p^e}}p^e$, in which
$\floor{\frac a{p^e}}$ is the floor function of $\frac a{p^e}$.
When adding up integers $a_i \in \mathbb N$ with $1 \le i \le m$, all
written in base $p$ expressions, we can talk about the carry over to
digit corresponding to $p^e$, which is simply 
$\floor{\frac {a_1 \trun_e + \dotsb + a_m\trun_e}{p^e}}$. 
These notations depend on the choice of $p$, which should be clear
from the context.

For any positive integers $p$ and $m$ (with $p$ prime), denote by
$M_{p,m}(i)$ (or simply $M(i)$ if $p$ and $m$ are understood) the rank
of  $(R[x_1,\,\dotsc,\, x_m]/(x_1^p,\,\dotsc,\, x_m^p))_i$ over $R$, for
all $i \in \mathbb Z$. This is clearly independent of $R$. 
Observe that $M_{p,m} =0$ exactly when $i > d(p-1)$ or $i < 0$. 
In fact, all $M_{p,m}(i)$ can be read off from the following Poincar\'e
series (actually a polynomial):
\[
\sum_{i = -\infty}^{\infty} M_{p,m}(i)t^i = \left(\frac{1-t^p}{1-t}\right)^m
=\left(1+\dotsb + t^{p-1}\right)^m.
\]

\subsection{Determinantal rings} \label{sec:det}

In this paper we consider the determinantal ring $K[X]/I$ where 
$X$ is an $m \times n$ matrix of indeterminates and $I$ is the ideal
of all the $2 \times 2$ minors of $X$ and $K$ a field.
This ring is isomorphic to the Segre product of 
$K[x_1,\dotsc, x_{m}]$ and $K[y_1,\dotsc, y_{n}]$. 

Recall that, for $\mathbb N$-graded commutative rings 
$A = \oplus_{i \in \mathbb N}A_i$ and $B = \oplus_{i \in \mathbb N}B_i$
such that $A_0 = R = B_0$, their Segre product is 
\[
A \segre B = \oplus_{i \in \mathbb N}(A_i \otimes_R B_i),
\]
which is a ring under the natural operations.

\begin{Def}
\label{def:segre-complete}
Let $S_{m,n}$ denote the completion of 
$K[x_1,\dotsc, x_{m}] \segre K[y_1, \dotsc, y_{n}]$ 
with respect to the ideal generated by all homogeneous elements of
positive degree, in which $K$ is a field and $m > n \ge 2$.  
It is easy to see that
\begin{align*}
S_{m,n} &\cong \prod_{\alpha \in \mathbb N^{m},\, 
\beta \in \mathbb N^{n},\, |\alpha| = |\beta|} 
Kx^{\alpha}y^{\beta} \\
&=\left\{
\sum_{|\alpha| = |\beta|}a_{\alpha,\, \beta}x^{\alpha}y^{\beta} 
\; \Big | \; 
a_{\alpha,\, \beta} \in K, \, \alpha \in \mathbb N^{m},\, 
\beta \in \mathbb N^{n}\right\}
\subset K[[x_1, \dotsc,x_{m}, y_1, \dotsc, y_{n}]]. 
\end{align*}
Let $\mathscr R_{m,n}$ be the anticanonical cover of $S_{m,n}$.
\end{Def}

The anticanonical cover of such a ring was described by Kei-ichi Watanabe.

\begin{Thm} [{\cite[page 430]{Wa}}] \label{thm:Watanabe}
Let $K$ be a field and $m > n \ge 2$. 
The anticanonical cover of the Segre product of $K[x_1, \dotsc,
x_{m}]$ and $K[y_1, \dotsc, y_{n}]$ is isomorphic to 
\[
\bigoplus_{i \in \mathbb N} 
\left(\bigoplus_{\alpha \in \mathbb N^{m},\, 
\beta \in \mathbb N^{n},\, |\alpha| - |\beta|=i(m-n)} 
Kx^{\alpha}y^{\beta}\right),
\]
in which the grading is governed by $i$. 
Here, for $\alpha = (a_1, \dotsc, a_m)$ and $\beta = (b_1, \dotsc,
b_{n})$ we denote $x^{\alpha} = x_1^{a_1} \dotsm x_m^{a_m}$ and
$y^{\beta} = y_1^{b_1} \dotsm y_{n}^{b_{n}}$.
\end{Thm}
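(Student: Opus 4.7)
The plan is to compute the divisor class group of $S := K[x_1,\dotsc,x_m] \segre K[y_1,\dotsc,y_n]$ and locate the canonical class inside it. The natural device is to view $S$ as the invariant ring of the polynomial ring $R := K[x_1,\dotsc,x_m,y_1,\dotsc,y_n]$ under the $K^*$-action $t\cdot x_i = tx_i$, $t\cdot y_j = t^{-1}y_j$; the corresponding $\mathbb Z$-grading on $R$ places $x^\alpha y^\beta$ in weight $|\alpha|-|\beta|$, and the weight-zero part $R_0$ equals $S$. For each $r\in\mathbb Z$, set
\[
M_r := \bigoplus_{|\alpha|-|\beta|=r} K x^\alpha y^\beta \;\subseteq\; R.
\]
Each $M_r$ is a reflexive rank-one $S$-module, $R = \bigoplus_{r\in\mathbb Z} M_r$ as $S$-modules, and polynomial multiplication in $R$ gives $M_r \cdot M_s = M_{r+s}$. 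A standard check at the height-one primes of $S$ identifies $\op{Cl}(S)\cong\mathbb Z$ via $r\mapsto [M_r]$.

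Next I would pin down the canonical class. The canonical module of $R$ is free of rank one, generated by $dx_1\wedge\cdots\wedge dx_m\wedge dy_1\wedge\cdots\wedge dy_n$, which carries $K^*$-weight $m-n$. Because $S$ sits inside $R$ as a graded direct summand (the Reynolds operator being the projection onto the weight-zero piece), the canonical module of $S$ equals the weight-zero submodule of $\omega_R$ viewed as a $K^*$-equivariant $R$-module; unwinding the definitions, the canonical generator must live in weight $n-m$, so $\omega_S \cong M_{n-m}$ and the canonical class in $\op{Cl}(S)\cong\mathbb Z$ is $n-m$.

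With the canonical class in hand, the anticanonical class is $-(n-m)=m-n$, so $\omega_S^{(-1)} \cong M_{m-n}$. Since multiplication inside $R$ satisfies $M_r\cdot M_s=M_{r+s}$ and each $M_{i(m-n)}$ is already reflexive, iterated reflexive powers are just iterated multiplications: $\omega_S^{(-i)}\cong M_{i(m-n)}$ for every $i\ge 0$. Assembling,
\[
\sR_{m,n} = \bigoplus_{i\ge 0}\omega_S^{(-i)} \;\cong\; \bigoplus_{i\ge 0}\Bigl(\bigoplus_{|\alpha|-|\beta|=i(m-n)} K x^\alpha y^\beta\Bigr),
\]
with multiplication inherited from $R$ and grading given by $i$, which is the desired description. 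Passing from $S$ to the completion $S_{m,n}$ is harmless since class group data and symbolic powers are preserved under $\fm$-adic completion of this normal local ring.

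The principal obstacle is the canonical-class computation in the middle step: one has to justify rigorously, across all characteristics, that $\omega_S$ sits in weight $n-m$ rather than $m-n$, without a sign error. The slickest route is the equivariant canonical module formula for graded direct-summand extensions (namely $\omega_S = (\omega_R)^{K^*}$ with the twist coming from the weight of the top form), which is valid independently of $\op{char}(K)$; alternatively, one can localize at the height-one primes of $S$ and read off the valuation of $\omega_S$ directly. This is precisely the calculation carried out in~\cite{Wa}, and once it is in place the rest is routine bookkeeping with the $\mathbb Z$-grading on $R$.
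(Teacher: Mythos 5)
The paper does not actually prove this statement: Theorem~\ref{thm:Watanabe} is quoted verbatim from Watanabe~\cite{Wa}, so a ``blind proof'' here means supplying Watanabe's computation yourself. Your outline has the right shape and lands on the correct answer ($\omega_S$ is the weight-$(n-m)$ module $M_{n-m}$, hence the anticanonical cover is $\bigoplus_{i\ge 0} M_{i(m-n)}$), but as written it has a genuine gap at exactly the step that carries all the content. You assert that $\omega_S$ is the weight-zero part of the equivariantly-twisted $\omega_R$ via an ``equivariant canonical module formula for graded direct-summand extensions''; no such off-the-shelf statement applies verbatim here (the invariant ring has dimension one less than $R$, and canonical-module formulas for torus invariants need hypotheses on the action, e.g.\ on the non-free locus), and you yourself label this the principal obstacle and defer it to~\cite{Wa} --- which makes the proposal circular as a substitute for the citation. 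A clean, characteristic-free way to close the gap is the Goto--Watanabe formula $\omega_{A \segre B} \cong \omega_A \segre \omega_B$ for positively graded Cohen--Macaulay $K$-algebras of dimension at least two: with $\omega_{K[x_1,\dotsc,x_m]} = K[x](-m)$ and $\omega_{K[y_1,\dotsc,y_n]} = K[y](-n)$ this gives directly $\omega_S \cong \bigoplus_{|\alpha|-|\beta| = n-m} Kx^{\alpha}y^{\beta} = M_{n-m}$, with no sign ambiguity and no equivariant duality needed.

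A second, smaller but real error: the blanket claim $M_r \cdot M_s = M_{r+s}$ is false when $r$ and $s$ have opposite signs; for instance $M_1 \cdot M_{-1}$ is the proper ideal of $S$ generated by the products $x_iy_j$, not $M_0 = S$. What you actually need (and what is true) is either the literal equality for $r,s \ge 0$ --- which suffices for the powers $M_{i(m-n)}$ with $i \ge 0$ --- or the divisorial statement $(M_r \otimes_S M_s)^{**} \cong M_{r+s}$, which is what the identification $\op{Cl}(S) \cong \mathbb Z$, $r \mapsto [M_r]$, really rests on and which deserves the height-one-prime verification you only gesture at. With those two repairs (Goto--Watanabe for the canonical class, and the reflexive product in place of the naive product), your argument becomes a complete proof; the final remark about passing to the completion is fine, and indeed the paper states the theorem for the uncompleted Segre product and only afterwards transports it to $S_{m,n}$ and $\sR_{m,n}$.
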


It follows from Theorem~\ref{thm:Watanabe} that 
\[
\mathscr R_{m,n} \cong \bigoplus_{i \in \mathbb N} 
\left(\prod_{\alpha \in \mathbb N^{m},\, 
\beta \in \mathbb N^{n},\, |\alpha| - |\beta|=i(m-n)} 
Kx^{\alpha}y^{\beta}\right),
\]
in which the grading is governed by $i$.

\begin{Lma}[\cite{EY}]
\label{lma:nearly-onto}
Let $A$ and $B$ be degree-wise finitely generated $\mathbb N$-graded
commutative rings and   
$h \colon A \to B$ be a graded ring homomorphism.
\begin{enumerate}
\item The homomorphism $h$ is nearly onto if and only if $B_i$
is generated by $h(A_i)$ as a $B_0$-module for all $i \in \mathbb N$
(that is, $B$ is generated by $h(A)$ as a $B_0$-module).
\item If $A$ and $B$ have prime characteristic $p$ and $h$
is nearly onto, then the induced graded homomorphism  
$T(h) \colon T(A) \to T(B)$ is nearly onto.
\end{enumerate}
\end{Lma}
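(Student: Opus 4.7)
My plan for part (1) is the following. The reverse implication is immediate, since a $B_0$-module generating set is \emph{a fortiori} a ring generating set over $B_0$. For the forward direction, assume $h$ is nearly onto and fix $b \in B_i$. Then $b$ is a polynomial expression in finitely many $h(a_j)$'s with $B_0$-coefficients; projecting onto the degree-$i$ component (which is legitimate because $h$ is graded) and pulling $B_0$-scalars out front, every contributing monomial has the form $b_0 \cdot h(a_{j_1})^{n_1}\cdots h(a_{j_k})^{n_k}$ with $\sum n_t \deg(a_{j_t}) = i$. Using commutativity of $A$ together with $h$ being a ring homomorphism, this collapses to $b_0 \cdot h(a_{j_1}^{n_1}\cdots a_{j_k}^{n_k})$, whose argument lies in $A_i$. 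Hence $B_i \subseteq B_0 \cdot h(A_i)$, giving the claim.

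For part (2), I would apply part (1) twice. By construction $T(h)_e$ is just the restriction of $h$ to $A_{p^e-1}$, landing in $B_{p^e-1}$. The crucial bookkeeping point is that the $T(B)_0$-module structure on $T(B)_e$ coincides with the usual $B_0$-module structure on $B_{p^e-1}$: indeed, for $a \in T(B)_0 = B_0$ and $b \in T(B)_e = B_{p^e-1}$, the $*$-product reduces to $a*b = a b^{p^0} = ab$. Since $h$ is nearly onto, the forward direction of part (1) gives $B_{p^e-1} = B_0 \cdot h(A_{p^e-1})$ for every $e \ge 0$; translated back, this says $T(B)_e = T(B)_0 \cdot T(h)(T(A)_e)$ for all $e$. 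The reverse direction of part (1), applied to $T(h)$, now yields that $T(h)$ is nearly onto.

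The substance of the argument is concentrated in the one-line computation in part (1) that commutativity of $A$ plus the ring-homomorphism property of $h$ lets a monomial in the $h(a_j)$'s be rewritten as a single element of $h(A_i)$. Once that is in hand, part (2) is essentially a translation step, the only delicate point being to verify that the degree-zero module action in $T$ is not twisted, which is transparent from the formula $a*b = ab^{p^e}$ with $e=0$. I therefore do not foresee a genuine obstacle.
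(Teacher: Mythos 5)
Your argument is correct, and since the paper only quotes this lemma from \cite{EY} without reproducing a proof, there is nothing to compare against beyond the statement itself; your route is the natural one. Part (1) is fine once one makes the standard reduction to homogeneous elements (replace each generator $a_j$ by its homogeneous components, which is legitimate precisely because $h$ is graded), and your key collapse $b_0\,h(a_{j_1})^{n_1}\dotsm h(a_{j_k})^{n_k}=b_0\,h(a_{j_1}^{n_1}\dotsm a_{j_k}^{n_k})\in B_0\cdot h(A_i)$ only uses that $h$ is a ring homomorphism. The one wording to adjust is in part (2): you cannot literally ``apply the reverse direction of part (1) to $T(h)$,'' since part (1) is stated for commutative graded rings and $T(A)$, $T(B)$ are skew algebras; but the implication you actually need --- that if each $T(B)_e$ is generated as a left $T(B)_0$-module by $T(h)(T(A)_e)$ then $T(B)=T(B)_0[T(h)(T(A))]$ --- is immediate for arbitrary graded rings, and your observation that the degree-zero left action is untwisted ($a*b=ab^{p^0}=ab$) is exactly the point that makes the translation from $B_{p^e-1}=B_0\cdot h(A_{p^e-1})$ legitimate. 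With that phrasing fixed, the proof is complete.
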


\begin{Cor}\label{cor:nearly-onto}
Let $A$ and $B$ be $\mathbb N$-graded commutative rings of prime
characteristic $p$. 
If there exists a graded ring homomorphism $h \colon A \to B$ that
is nearly onto, then $c_e(T(A)) \ge c_e(T(B))$ for all $e \ge 0$.
\end{Cor}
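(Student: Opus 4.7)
The plan is to derive this corollary as an immediate formal consequence of two prior results: Lemma~\ref{lma:nearly-onto}(2) and Theorem~\ref{thm:nearly-onto}. There is essentially no new computation; the task is simply to chain them together.

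First I apply Lemma~\ref{lma:nearly-onto}(2) to the given nearly onto graded ring homomorphism $h \colon A \to B$. Because $A$ and $B$ have prime characteristic $p$, this lemma produces an induced graded ring homomorphism $T(h) \colon T(A) \to T(B)$ which is again nearly onto. The substantive content of this step is already packaged in the lemma: pulling back a generating set for $B$ as a $B_0$-module along $h$ remains a generating set after passing to the $(p^e-1)$-degree pieces, compatibly with the twisted multiplication $a * b = a b^{p^e}$ defining the ring structure on $T(\sR)$. Next I apply Theorem~\ref{thm:nearly-onto} to $T(h) \colon T(A) \to T(B)$ to conclude $c_e(T(A)) \ge c_e(T(B))$ for all $e \ge 0$, which is the desired inequality.

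The only loose end is the degree-wise finitely generated hypothesis required by Theorem~\ref{thm:nearly-onto}. In the situations of interest in this paper, both $A$ and $B$ have Noetherian degree-zero part and finitely generated graded components, so each $T_e(A) = A_{p^e - 1}$ and $T_e(B) = B_{p^e - 1}$ is finitely generated as well, and the hypothesis transfers automatically to $T(A)$ and $T(B)$. Since the corollary reduces to a formal composition of two already established results, I do not foresee any substantial obstacle; the work is really the proof of Lemma~\ref{lma:nearly-onto}(2), which shows that near-ontoness is preserved under the $T(-)$ construction in prime characteristic.
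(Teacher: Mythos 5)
Your proof is correct and is exactly the argument the paper intends: the corollary is the immediate composition of Lemma~\ref{lma:nearly-onto}(2), which shows $T(h)\colon T(A)\to T(B)$ is again nearly onto, with Theorem~\ref{thm:nearly-onto} applied to $T(h)$. Your remark about the degree-wise finitely generated hypothesis is the right caveat to flag (it is implicitly assumed so that $c_e(T(A))$ and $c_e(T(B))$ are even defined), and it holds in all the applications in the paper.
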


\begin{Prop}[{Compare with \cite[Proposition~5.5]{EY}}] 
\label{prop:nearly-onto}
Let $K$, $S_{m,n}$ and $\sR_{m,n}$ be as in Definition~\ref{def:segre-complete}
with $m > n \ge 2$. 
Then there are nearly onto graded ring homomorphisms from $\sR_{m,n}$ to
$V_{m-n}(K[x_1, \dotsc, x_{m}])$ and vice versa, in which
$V_{m-n}(K[x_1, \dotsc, x_{m}])$ denotes the $(m-n)$-Veronese subring
of $K[x_1, \dotsc, x_{m}]$.
\end{Prop}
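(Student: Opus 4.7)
The strategy is to construct explicit graded ring homomorphisms in both directions and verify the nearly-onto condition using Lemma~\ref{lma:nearly-onto}(1), which reduces the task to showing that each graded piece of the target is generated, as a module over its degree-zero component, by the image of the corresponding piece of the source.

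For the map $h \colon \sR_{m,n} \to V_{m-n}(K[x_1,\dotsc,x_m])$, my plan is to ``kill the $y$-variables'': on the degree $i$ piece, send $\sum a_{\alpha,\beta}\,x^\alpha y^\beta \in (\sR_{m,n})_i$ to $\sum_{|\alpha|=i(m-n)} a_{\alpha,0}\,x^\alpha$, which is an element of $K[x_1,\dotsc,x_m]_{i(m-n)} = (V_{m-n})_i$. Since only finitely many $\alpha$ satisfy $|\alpha|=i(m-n)$, the output is a polynomial, so $h$ is well-defined despite the infinite product structure of $(\sR_{m,n})_i$. A direct calculation shows that the ``$y$-free part'' of a product of two elements equals the product of their $y$-free parts, so $h$ is a graded ring homomorphism; and it is surjective, because each monomial $x^\alpha \in (V_{m-n})_i$ is the image of $x^\alpha \in (\sR_{m,n})_i$, and surjectivity trivially implies nearly onto.

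For the reverse direction I would use the natural inclusion $\iota \colon V_{m-n}(K[x_1,\dotsc,x_m]) \to \sR_{m,n}$ sending each $x_i$ to $x_i$: a homogeneous element of $(V_{m-n})_i$ is a polynomial of total $x$-degree $i(m-n)$, which sits inside $(\sR_{m,n})_i$ by taking $\beta = 0$. This is plainly a graded ring homomorphism. To check nearly-ontoness, I would take an arbitrary $\eta = \sum a_{\alpha,\beta}\,x^\alpha y^\beta \in (\sR_{m,n})_i$ and express it as a finite $S_{m,n}$-linear combination of the finitely many monomials $x^{\alpha''}$ with $|\alpha''|=i(m-n)$. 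For this, fix once and for all a ``splitting rule'' $\sigma$ that, for each $\alpha$ with $|\alpha| \ge i(m-n)$, selects a decomposition $\alpha = \alpha'+\alpha''$ with $\alpha''\le \alpha$ componentwise and $|\alpha''|=i(m-n)$ (for instance, a greedy lexicographic choice). Then set
\[
s_{\alpha''} = \sum_{(\alpha',\beta)\,:\,\sigma(\alpha'+\alpha'')=\alpha''} a_{\alpha'+\alpha'',\beta}\,x^{\alpha'} y^\beta,
\]
so that $\eta = \sum_{\alpha''} s_{\alpha''}\,\iota(x^{\alpha''})$ by construction.

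The main obstacle I anticipate is ensuring in this second direction that each coefficient $s_{\alpha''}$ really lies in $S_{m,n}$ rather than in some larger completion. The key observation is that the constraint $|\alpha'| = |\beta|$ is automatic---it follows from $|\alpha'+\alpha''|-|\beta| = i(m-n) = |\alpha''|$---so each bucket that $\sigma$ carves out is a legitimate formal sum in $S_{m,n}$. With both maps in hand, the two nearly-onto conclusions follow directly from Lemma~\ref{lma:nearly-onto}(1).
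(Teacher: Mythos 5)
Your proposal is correct and follows essentially the same route as the paper: the paper's map $\phi$ is exactly your ``kill the $y$-variables'' homomorphism (shown onto since it splits the inclusion), and its $\psi$ is your inclusion $\iota$, with nearly-ontoness checked via Lemma~\ref{lma:nearly-onto}(1) by generating $(\sR_{m,n})_i$ over $S_{m,n}$ by the monomials $x^{\alpha''}$ of degree $i(m-n)$. Your explicit splitting rule $\sigma$ just fills in the detail the paper labels routine, and your observation that $|\alpha'|=|\beta|$ is automatic is precisely why the coefficients land in $S_{m,n}$.
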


\begin{proof}
In light of Definition~\ref{def:segre-complete} and
Theorem~\ref{thm:Watanabe}, we simply assume  
\begin{align*}
\mathscr R_{m,n} = \bigoplus_{i \in \mathbb N} 
\left(\prod_{\alpha \in \mathbb N^{m},\, 
\beta \in \mathbb N^{n},\, |\alpha| - |\beta|=i(m-n)} 
Kx^{\alpha}y^{\beta}\right).
\end{align*}
Define $\phi \colon \sR_{m,n} \to V_{m-n}(K[x_1, \dotsc, x_{m}])$ 
and $\psi \colon V_{m-n}(K[x_1, \dotsc, x_{m}]) \to \sR_{m,n}$ by 
\begin{align*}
\phi(f(x_1,\dotsc, x_m,\, y_1, \dotsc, y_{n}))
&= f(x_1,\dotsc, x_m,\, 0, \dotsc, 0) \in K[x_1, \dotsc, x_{m}]\\
\text{and} \qquad 
\psi(g(x_1,\dotsc, x_m)) &= g(x_1,\dotsc, x_m) \in \sR_{m,n},
\end{align*}
for all $f(x_1,\dotsc, x_m,\, y_1, \dotsc, y_{n}) \in \sR_{m,n}$ 
and all $g(x_1,\dotsc, x_m) \in V_{m-n}(K[x_1, \dotsc, x_{m}])$.

It is routine to verify that both $\phi$ and $\psi$ are graded ring
homomorphisms. 
As $\phi \circ \psi$ is the identity map, we see that $\phi$ is onto
and hence nearly onto.
Finally, note that for every $i \in \mathbb N$, $(\sR_{m,n})_i$ is
generated by $\psi(V_{m-n}(K[ x_1, \dotsc, x_{m}])_i) 
= \psi(K[ x_1, \dotsc, x_{m}]_{i(m-n)})$ as a module over
$(\sR_{m,n})_0 = S_{m,n}$. So $\psi$ is nearly onto, completing the proof.
\end{proof}

\begin{Thm}\label{thm:segre}
Let $K$, $S_{m,n}$ and $\sR_{m,n}$ be as in 
Definition~\ref{def:segre-complete}
with $m > n \ge 2$. 

\begin{enumerate}
\item Then $\sR_{m,n}$ and $V_{m-n}(K[x_1,\dotsc, x_{m}])$ have the same
complexity sequence.
\item If $K$ has prime characteristic $p$, then $T(\sR_{m,n})$ and
  $T(V_{m-n}(K[x_1,\dotsc, x_{m}]))$ have the same complexity sequence. 
\item If $K$ has prime characteristic $p$, then
\[
\cx(\sF(E_{m,n})) = \cx(T(\sR_{m,n})) = \cx(T(V_{m-n}(K[x_1,\dotsc, x_{m}]))), 
\]
in which $E_{m,n}$ stands for the injective hull of the residue field
of $S_{m,n}$. Consequently, 
\[
\cx_F(S_{m,n}) = \log_p\cx(T(V_{m-n}(K[x_1,\dotsc, x_{m}]))).
\]
\end{enumerate}
\end{Thm}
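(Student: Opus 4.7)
The plan is to handle the three parts in sequence, leveraging the two nearly onto homomorphisms supplied by Proposition~\ref{prop:nearly-onto} together with the Katzman--Schwede--Singh--Zhang identification of $\sF(E_{m,n})$ with $T(\sR_{m,n})$; almost everything reduces to applying machinery already set up in the preceding subsections.

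For part~(1), I apply Theorem~\ref{thm:nearly-onto} to each of the two nearly onto maps from Proposition~\ref{prop:nearly-onto}. The map $\phi\colon \sR_{m,n} \to V_{m-n}(K[x_1,\dotsc, x_m])$ yields $c_e(\sR_{m,n}) \ge c_e(V_{m-n}(K[x_1,\dotsc, x_m]))$ for every $e \ge 0$, while $\psi\colon V_{m-n}(K[x_1,\dotsc, x_m]) \to \sR_{m,n}$ produces the reverse inequality; hence the two complexity sequences agree termwise. For part~(2), exactly the same two-sided comparison goes through after passing to $T(\cdot)$: Corollary~\ref{cor:nearly-onto} applied to $\phi$ and $\psi$ gives $c_e(T(\sR_{m,n})) = c_e(T(V_{m-n}(K[x_1,\dotsc, x_m])))$ for all $e \ge 0$. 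In particular both pairs of rings have equal complexity.

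For part~(3), the first equality $\cx(\sF(E_{m,n})) = \cx(T(\sR_{m,n}))$ comes from the graded ring isomorphism $\sF(E_{m,n}) \cong T(\sR_{m,n})$, which is the content of the Katzman--Schwede--Singh--Zhang construction identifying the algebra of Frobenius operators on the injective hull of the residue field with $T$ applied to the anticanonical cover of a normal local ring (valid for $S_{m,n}$ since it is a normal complete local domain). The middle equality $\cx(T(\sR_{m,n})) = \cx(T(V_{m-n}(K[x_1,\dotsc, x_m])))$ is immediate from part~(2), and the consequence $\cx_F(S_{m,n}) = \log_p \cx(T(V_{m-n}(K[x_1,\dotsc, x_m])))$ is then merely a restatement via Definition~\ref{def-FCX}.

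The only step with any substance is the identification $\sF(E_{m,n}) \cong T(\sR_{m,n})$ in part~(3), and even that is a citation rather than new work; the real content of the theorem is in the preceding Proposition~\ref{prop:nearly-onto}, which constructs nearly onto maps in both directions between $\sR_{m,n}$ and the Veronese. Once those maps are available, the complexity statements drop out mechanically from the two-sided application of Theorem~\ref{thm:nearly-onto} and Corollary~\ref{cor:nearly-onto}; the usefulness of the theorem is that it transports the computation of $\cx_F(S_{m,n})$ to a much more concrete combinatorial problem about the Veronese subring of a polynomial ring.
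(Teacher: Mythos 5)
Your proposal is correct and follows essentially the same route as the paper: the paper's proof is just the citation of Proposition~\ref{prop:nearly-onto} and Corollary~\ref{cor:nearly-onto} (equivalently, Theorem~\ref{thm:nearly-onto} applied to the two nearly onto maps $\phi$ and $\psi$ in both directions), together with the Katzman--Schwede--Singh--Zhang identification of $\sF(E_{m,n})$ with $T$ of the anticanonical cover, exactly as you spell out.
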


\begin{proof}
This follows from Corollary~\ref{cor:nearly-onto},
Proposition~\ref{prop:nearly-onto} and \cite[Theorem~3.3]{KSSZ}.
\end{proof} 

In summary, to compute the Frobenius complexity of $S_{m,n}$ with
$m > n \ge 2$, it suffices to study $T(V_{r}(K[x_1,\dotsc, x_{m}]))$
with $r = m-n$ (hence $0< r \le m-2$).
The next section is devoted to the study of $T(V_{r}(K[x_1,\dotsc,
x_{m}]))$, more generally with $1 \le r,\,m \in \mathbb N$.

\section{Investigating $T(V_r(R[x_1,\,\dotsc,\,x_m]))$}

Let $R$ be a commutative ring of prime characteristic $p$ and $r,\,m$
positive integers. 
In this section, we study $T(V_r(R[x_1,\,\dotsc,\,x_m]))$. In
particular, we are interested in when it is finitely
generated over $R$, as well as how to compute its complexity. 

To simplify notation, denote the following (with $R$, $p$, $m$ and $r$
understood): 
\begin{itemize}
\item $\sR := R[x_1,\,\dotsc,\,x_m]$. 
\item $\sV := V_r(\sR) = V_r(R[x_1,\,\dotsc,\,x_m])$.
\item $T: = T(\sV) =T(V_r(R[x_1,\,\dotsc,\,x_m]))$.
\item $G_e:= G_{e}(T)$.
\item $T_e:=T_e(\sV) = T_e(V_r(R[x_1,\,\dotsc,\,x_m])) =
  \sR_{r(p^e-1)} = (R[x_1,\,\dotsc,\,x_m])_{r(p^e-1)}$.  
As there are several gradings going on, when we say the degree of a
monomial, we agree that it refers to its (total) degree in $\sR =
R[x_1,\,\dotsc,\,x_m]$. Thus a monomial in $T_e$ is a monomial of
(total) degree $r(p^e-1)$. 
Note that $T_e=\sR_{r(p^e-1})$ is an $R$-free (left) module with a basis
consisting of monomials of (total) degree $r(p^e-1)$. 
In particular, $T_0 = R$.
\end{itemize}

Fix any $e \in \mathbb N$. We see that $G_{e-1} = G_{e-1}(T)$ is
an $R$-free (left) module with a basis consisting of monomials that
can be expressed as products (under $*$, the multiplication of $T$) of
monomials of degree $r(p^i-1)$ where 
$i \leq e-1$. So all such monomials of total degree $r(p^e-1)$ form an
$R$-basis of $(G_{e-1})_e$. 

In conclusion, $\frac{T_e}{(G_{e-1})_e}$ is free as a left $R$-module
with a basis given by monomials of degree $r(p^e-1)$ that cannot be
written as products (under $*$) of monomials of degree $r(p^i-1)$,
with $i \leq e-1$. We will refer to this basis as the \emph{monomial
basis} of $\frac{T_e}{(G_{e-1})_e}$. By Corollary~\ref{interpretation}, 
we see $c_e(T) = \rank_R(\frac{T_e}{(G_{e-1})_e})$.  

As $c_0(T) = 0$ and $c_1(T) = \rank_R(T_1) = \rank_R(\sR_{r(p-1)})$, we
may assume $e \ge 2$ in the following discussion.

Let $\alpha = (a_1, \dotsc, a_m) \in \mathbb N^m$ such that
$|\alpha| := a_1 + \dotsb + a_m = r(p^e-1)$, so that 
$x^{\alpha} := x_1^{a_1} \cdots x_m^{a_m}$ is a monomial in $T_e$ 
(i.e., of degree $r(p^e-1)$). This monomial
$x^{\alpha}$ belongs to $(G_{e-1})_e$ if and only if it can be decomposed as  
\[
x^{\alpha} = x^{\alpha'} * x^{\alpha''}= x^{\alpha' + p^{e'}\alpha''}
\] 
for some $x^{\alpha'} \in T_{e'},\, x^{\alpha''}\in T_{e''}$ 
with $1 \leq e',\,e'' \le e-1$ and $e' +e'' =e$.
In other words, $x^{\alpha} \in (G_{e-1})_e$ if and only if there is
an equation
\[
{\alpha} = {\alpha' + p^{e'}\alpha''}
\]
for some $\alpha',\,\alpha'' \in \mathbb N^m,\,
1 \leq e' \le e-1,\, e'+e''=e$ with
$|\alpha'| =r(p^{e'}-1)$ and $|\alpha''| = r(p^{e''}-1)$,
which is equivalent to the existence of equations 
\[
a_i = a_i' + p^{e'}a_i'' \quad \text{for all} \quad i \in \{1,\dotsc, m\}
\]
for some $(a_1',\dotsc,a_m'),\,(a_1'',\dotsc,a_m'') \in \mathbb N^m,\, 
1 \leq e' \le e-1,\, e'+e''=e$ with
$\sum_{i=1}^m a'_i =r(p^{e'}-1)$ and $\sum_{i=1}^m a''_i = r(p^{e''}-1)$.
Now it is routine to see that the above holds if and only if there
exist $(a_1',\dotsc,a_m') \in \mathbb N^m$ and $1 \leq e' \le e-1$ with
$\sum_{i=1}^m a'_i =r(p^{e'}-1)$ such that
\[
a_i\trun_{e'} \le a_i' \le a_i \ \text{ and }\  a_i\trun_{e'} = a_i'\trun_{e'}
\quad \text{for all} 
\quad i \in \{1,\dotsc, m\},
\]
which can be seen to be equivalent to the existence of an integer 
$1 \leq e' \leq e-1$ such that 
\[
a_1 \trun_{e'} + \cdots + a_m \trun_{e'} \le r(p^{e'}-1),
\]
which is equivalent to the existence of an integer 
$1 \leq e' \leq e-1$ such that 
\[
\floor{\frac {a_1 \trun_{e'} + \cdots + a_m \trun_{e'}}{p^{e'}}} 
\le \floor{\frac {r(p^{e'}-1)}{p^{e'}}}.
\]
Note that the backward implications of
the last two equivalences rely on the fact that 
$a_1\trun_{e'} + \cdots + a_m \trun_{e'}$ and $r(p^{e'}-1)$
are in the same congruence class modulo $p^{e'}$; the backward
implications of the next to last equivalence also relies on the fact
$a_i\trun_{e'} \equiv a_i \mod p^{e'}$ for all $i$, which allows us to
reverse-engineer $(a_1',\dotsc,a_m') \in \mathbb N^m$ as desired.

With the argument above, we establish the following result. (Again, the fact  
$a_1\trun_{i} + \cdots + a_m \trun_{i} \equiv r(p^{i}-1) \mod p^{i}$
is needed in part~(2) of the following proposition.)

\begin{Prop} \label{prop:good/bad}
Consider $T = T(V_r(R[x_1,\,\dotsc,\,x_m]))$, in prime characteristic $p$.
\begin{enumerate}
\item For any monomial $x_1^{a_1} \cdots x_m^{a_m} \in T_e$ with $e \ge 1$, 
the following are equivalent.
\begin{itemize}
\item $x_1^{a_1} \cdots x_m^{a_m} \in G_{e-1}(T)$.
\item There exists an integer $i$, $1 \le i
\le e-1$, such that the carry-over to the digit associated 
with $p^{i}$ is less than or equal to $\floor{\frac {r(p^{i}-1)}{p^{i}}}$
when $a_1 + \dotsb + a_m$ is calculated in base $p$.
\end{itemize}

\item For any monomial $x_1^{a_1} \cdots x_m^{a_m} \in T_e$ with $e \ge 1$, 
the following are equivalent.
\begin{itemize}
\item $x_1^{a_1} \cdots x_m^{a_m} \notin G_{e-1}(T)$.
\item $a_1 \trun_{i} + \cdots + a_m \trun_{i} = r(p^{i}-1) +
  d_{i}p^{i}$ with $1 \le d_{i} \in \mathbb N$ for all $1 \le i \le e-1$.
\item The carry-over to the digit associated with $p^{i}$ is greater than
$\floor{\frac {r(p^{i}-1)}{p^{i}}}$ for all $1 \le i \le e-1$ 
when $a_1 + \dotsb + a_m$ is calculated in base $p$. 
\end{itemize}
\end{enumerate} 
\end{Prop}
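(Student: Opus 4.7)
The bulk of the work is already accomplished in the discussion immediately preceding the statement, so my plan is essentially to package that discussion as a proof and to verify the reformulations carefully.

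First I would restate the central equivalence extracted from the preceding argument: a monomial $x^{\alpha} = x_1^{a_1}\cdots x_m^{a_m} \in T_e$ lies in $(G_{e-1})_e$ if and only if there exists $e'$ with $1 \le e' \le e-1$ such that
\[
a_1\trun_{e'} + \cdots + a_m\trun_{e'} \le r(p^{e'}-1).
\]
The forward implication is a direct consequence of the decomposition $\alpha = \alpha' + p^{e'}\alpha''$ with $|\alpha'| = r(p^{e'}-1)$, which forces $a_i\trun_{e'} \le a_i'$ and thus bounds the sum. The backward implication is the reverse engineering step: given such an $e'$, define $a_i' := a_i\trun_{e'} + (\text{correction})$ and $a_i'' := (a_i - a_i')/p^{e'}$, distributing the slack $r(p^{e'}-1) - \sum a_i\trun_{e'}$ among the coordinates. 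What makes this work, and what I would emphasize, is that $\sum a_i\trun_{e'} \equiv |\alpha| \equiv r(p^e - 1) \equiv r(p^{e'}-1) \pmod{p^{e'}}$, so the slack is divisible by $p^{e'}$ and the $a_i''$ end up being nonnegative integers summing to $r(p^{e''}-1)$.

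For part~(1), the key observation is that the carry-over to the digit of $p^{e'}$ when computing $a_1 + \cdots + a_m$ in base $p$ is exactly $\bigl\lfloor (a_1\trun_{e'} + \cdots + a_m\trun_{e'})/p^{e'}\bigr\rfloor$, by the very definition of carry-over given in the notations subsection. Thus $a_1\trun_{e'} + \cdots + a_m\trun_{e'} \le r(p^{e'}-1)$ is equivalent to
\[
\Bfloor{\frac{a_1\trun_{e'} + \cdots + a_m\trun_{e'}}{p^{e'}}} \le \Bfloor{\frac{r(p^{e'}-1)}{p^{e'}}},
\]
again using the congruence $\sum a_i\trun_{e'} \equiv r(p^{e'}-1) \pmod{p^{e'}}$ to pass the floor across the inequality. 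This yields the equivalence claimed in part~(1).

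For part~(2), I would just take contrapositives of part~(1). The first bulleted equivalent (``$\notin G_{e-1}(T)$'') matches the negation of the carry-over condition, which gives the third bullet. The middle reformulation follows because $a_1\trun_i + \cdots + a_m\trun_i > r(p^i-1)$, combined with the congruence $a_1\trun_i + \cdots + a_m\trun_i \equiv r(p^i-1) \pmod{p^i}$, forces
\[
a_1\trun_i + \cdots + a_m\trun_i = r(p^i-1) + d_i p^i
\]
with $d_i \ge 1$. No step is a real obstacle here; the only subtlety is being careful with the congruence $\sum a_i\trun_{e'} \equiv r(p^{e'}-1) \pmod{p^{e'}}$, which is exactly what lets all three inequalities (on sums, on floors, on carry-overs) become equivalent once the strict/non-strict direction is settled.
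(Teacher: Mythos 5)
Your proposal is correct and is essentially the paper's own argument: the paper proves this proposition precisely by the discussion you are packaging (decompose $\alpha = \alpha' + p^{e'}\alpha''$, reduce to $\sum_i a_i\trun_{e'} \le r(p^{e'}-1)$, then pass to the floor/carry-over form and to the $d_i$-reformulation using the congruence $\sum_i a_i\trun_{e'} \equiv r(p^{e'}-1) \pmod{p^{e'}}$). Your emphasis on that congruence for the reverse-engineering and for moving between the sum, floor, and carry-over statements is exactly the point the paper highlights.
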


\begin{Prop} \label{prop:count}
For $T = T(V_r(R[x_1,\,\dotsc,\,x_m]))$, $c_e(T)$ is the number of monomials 
$x_1^{a_1} \cdots x_m^{a_m} \in T_e$ such that
the carry-over to the digit associated 
with $p^{i}$ is bigger than $\floor{\frac {r(p^{i}-1)}{p^{i}}}$
for all $1 \le i\le e-1$ 
when $a_1 + \dotsb + a_m$ is calculated in base $p$. 
\end{Prop}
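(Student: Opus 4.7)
The plan is to read this proposition as an immediate consequence of the machinery already set up in the preceding discussion and in Proposition~\ref{prop:good/bad}. The statement is essentially a counting reformulation, so the strategy is to assemble three ingredients in the right order rather than prove anything genuinely new.

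First, I would invoke Corollary~\ref{interpretation}. To do so I need $T$ to be an $R$-skew algebra with $T_0 = R$, which is clear since $T = T(V_r(R[x_1,\dotsc,x_m]))$ has degree zero piece $T_0 = \sR_0 = R$ and the multiplication $a * b = a b^{p^e}$ satisfies $a R \subseteq R a$ for homogeneous $a$ of degree $e$ (using $a \cdot r = a r = r^{p^e} \cdot_{\text{skew}} a$ in the appropriate sense of the skew structure). Corollary~\ref{interpretation} then gives $c_e(T) = \mu_R(T_e/(G_{e-1})_e)$, the minimal number of left $R$-generators of $T_e/(G_{e-1})_e$.

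Second, I would quote the explicit description of the monomial basis of $T_e/(G_{e-1})_e$ established in the paragraph preceding Proposition~\ref{prop:good/bad}: since $T_e = \sR_{r(p^e-1)}$ is $R$-free on the monomials of total degree $r(p^e-1)$ and $(G_{e-1})_e$ is the $R$-span of those monomials expressible as $*$-products of lower-degree monomials, the quotient $T_e/(G_{e-1})_e$ is $R$-free on the monomials $x_1^{a_1} \cdots x_m^{a_m}$ of degree $r(p^e-1)$ that do \emph{not} lie in $(G_{e-1})_e$. In particular, $c_e(T)$ equals the cardinality of this monomial set.

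Third, I would apply Proposition~\ref{prop:good/bad}(2) to translate membership of $x_1^{a_1}\cdots x_m^{a_m}$ in the complement of $(G_{e-1})_e$ into the base-$p$ carry-over condition: namely, that the carry-over to the digit associated with $p^i$ in the base-$p$ addition $a_1 + \dotsb + a_m$ exceeds $\lfloor r(p^i-1)/p^i \rfloor$ for every $1 \le i \le e-1$. Counting such monomials then gives the claimed formula for $c_e(T)$. There is no real obstacle here; the only thing to be careful about is that the case $e = 0$ (where $c_0(T) = 0$ and the vacuous condition would overcount) and $e = 1$ (where there is no nontrivial $i$ and the condition is vacuous, matching $c_1(T) = \rank_R T_1$) are consistent with the formula, which I would verify in one line.
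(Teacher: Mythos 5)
Your proposal is correct and follows exactly the route the paper intends: the paper gives no separate proof of Proposition~\ref{prop:count} because it is precisely the combination of Corollary~\ref{interpretation}, the monomial-basis description of $T_e/(G_{e-1})_e$ established just before Proposition~\ref{prop:good/bad}, and Proposition~\ref{prop:good/bad}(2), which is what you assemble. Your only caveat, the degenerate case $e=0$, is likewise consistent with the paper, which only uses the statement for $e \ge 1$.
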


Using the criteria given in Proposition~\ref{prop:good/bad}, we are
able to determine precisely when  
$T(V_r(R[x_1,\,\dotsc,\,x_m]))$ is finitely generated over $T_0 = R$.

\begin{Thm}\label{thm:mr}
Let $T = T(V_r(R[x_1,\,\dotsc,\,x_m]))$, with $r,\,m,\,R$ as above. 
\begin{enumerate}
\item If $r \ge m-1$, then $T$ is generated by $T_1$ over $T_0$ (that
  is, $c_e(T) = 0$ for all $e \ge 2$). 
\item If $r < m-1$, then $c_e(T) > 0$ (i.e.,
  $T_e$ is not generated by lower degree) for all $e \ge 1$.
\item The ring $T(V_r(R[x_1,\,\dotsc,\,x_m]))$ is finitely generated over
$R$ if and only if $r \ge m-1$.
\end{enumerate}
\end{Thm}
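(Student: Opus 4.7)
The plan is to prove parts~(1) and~(2) separately; part~(3) will follow at once, since $T$ is finitely generated over $R = T_0$ if and only if $c_e(T) = 0$ for all sufficiently large $e$. Throughout, I will rely on the criterion from Proposition~\ref{prop:good/bad}, according to which a monomial $x^{\alpha} = x_1^{a_1}\cdots x_m^{a_m} \in T_e$ fails to lie in $(G_{e-1})_e$ precisely when
\[
a_1\trun_i + \cdots + a_m\trun_i \,\ge\, r(p^i - 1) + p^i
\qquad\text{for every } 1 \le i \le e-1.
\]
I will also use repeatedly that, since $a_1 + \cdots + a_m = r(p^e-1) \equiv -r \pmod{p^i}$, one has $a_1\trun_i + \cdots + a_m\trun_i \equiv r(p^i-1) \pmod{p^i}$; consequently, any positive value of $\sum_j a_j\trun_i - r(p^i-1)$ is automatically at least $p^i$.

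For part~(1), assuming $r \ge m-1$ and $e \ge 2$, I would apply the criterion at $i = 1$. The trivial bound $a_j\trun_1 \le p-1$ gives $\sum_j a_j\trun_1 \le m(p-1)$, while the ``bad'' condition would demand $\sum_j a_j\trun_1 \ge r(p-1) + p$. Combining these yields $(m-r)(p-1) \ge p$, hence $m - r \ge p/(p-1) > 1$; as $m-r$ is an integer, this forces $m - r \ge 2$, contradicting $r \ge m-1$. Thus every monomial of $T_e$ lies in $(G_{e-1})_e$ and $c_e(T) = 0$.

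For part~(2), suppose $r \le m-2$. The case $e = 1$ is automatic from $c_1(T) = \rank_R T_1 > 0$, so I would focus on $e \ge 2$ and exhibit the explicit monomial given by
\[
a_1 = \cdots = a_{r+1} = p^{e-1} - 1, \qquad
a_{r+2} = r(p^e-1) - (r+1)(p^{e-1}-1), \qquad
a_j = 0 \text{ for } j \ge r+3.
\]
The hypothesis $r \le m-2$ ensures $r+2 \le m$, and a quick rearrangement shows $a_{r+2} = p^{e-1}(rp - r - 1) + 1 \ge 1$, so this $\alpha$ has $|\alpha| = r(p^e-1)$. For each $1 \le i \le e-1$, the last $e-1$ base-$p$ digits of $a_j$ are all equal to $p-1$ when $j \le r+1$, giving $a_j\trun_i = p^i-1$ and $\sum_j a_j\trun_i \ge (r+1)(p^i-1) > r(p^i-1)$; the congruence recalled above then promotes this strict inequality to $\sum_j a_j\trun_i \ge r(p^i-1) + p^i$, so $x^{\alpha} \notin (G_{e-1})_e$ and $c_e(T) > 0$.

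The main creative step is the construction in part~(2), which must satisfy all $e-1$ lower bounds on truncations simultaneously. Concentrating the mass in the lowest $e-1$ base-$p$ digits of exactly $r+1$ variables (with one further variable absorbing the remainder) makes transparent why $r \le m-2$ is the sharp threshold: one genuinely needs two ``spare'' variables beyond the first $r$. The subtle point is the interplay between the simple inequality $(r+1)(p^i-1) > r(p^i-1)$ and the mod-$p^i$ congruence, which upgrades a gap of size $1$ into a gap of size $p^i$, making the elementary construction meet the strong bound required by the criterion.
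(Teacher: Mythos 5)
Your proposal is correct and follows essentially the same route as the paper: both parts rest on the truncation/carry-over criterion of Proposition~\ref{prop:good/bad}, with part (1) proved by the same numerical contradiction and part (2) by exhibiting an explicit witness monomial outside $G_{e-1}(T)$. Your witness (exponents $p^{e-1}-1$ on the first $r+1$ variables, the remainder on $x_{r+2}$) differs only cosmetically from the paper's choice, and your congruence-upgrade argument that a positive excess over $r(p^i-1)$ must be at least $p^i$ is the same fact the paper invokes when stating the criterion.
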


\begin{proof}
Evidently, we only need to prove (1) and (2).

(1) Suppose, on the contrary, that for some $e \ge 2$ there exists a
monomial $x_1^{a_1} \cdots x_m^{a_m} \in T_e$ that does not belong to
$G_{e-1}(T)$. Then by Proposition~\ref{prop:good/bad}
\[
a_1 \vert_{i} + \cdots + a_m\vert_{i} \ge r(p^{i}-1) + p^{i}
\]
for all $1 \le i \le e-1$. However, the assumption $r \ge m-1$
implies
\[
a_1 \trun_{i} + \cdots + a_m \trun_{i} \le m(p^{i}-1) \le
(r+1)(p^{i}-1) < r(p^{i}-1) + p^{i}.
\]
We get a contradiction.

(2) As $c_1(T) > 0$ is clear, we assume $e \ge 2$. Consider
\[
x_1^{p^e-1}\dotsm x_{r-1}^{p^e-1}
x_r^{p^e-p^{e-1}-1}x_{r+1}^{p^{e-1}-1}x_{r+2}^{1} 
\in \sR_{r(p^e-1)} = T_e.
\]
Now it is routine to see that the carry-over to the digit associated
with $p^{i}$ is $\floor{\frac {r(p^{i}-1)}{p^{i}}} +1$
for all $1 \le i \le e-1$ when $a_1 = p^e-1,\, \dotsc,\,a_{r-1} =
p^e-1$, $a_r = p^e-p^{e-1}-1$, $a_{r+1}={p^{e-1}-1}$, $a_{r+2}={1}$
and $a_i = 0$ (for $r+2 < i \le m$) are added up in base $p$. This
verifies 
$x_1^{p^e-1}\dotsm x_{r-1}^{p^e-1}x_r^{p^e-p^{e-1}-1}x_{r+1}^{p^{e-1}-1}x_{r+2} 
\notin G_{e-1}(T)$ and hence $c_e(T) > 0$. 
\end{proof}

\section{Computing $c_e(T(V_r(R[x_1,\,\dotsc,\,x_m])))$}
\label{sec:computing-ce}

Let $R$, $m$, $r$, $\sR$, $\sV$ and $T$ be as in last section and keep
the notations. 
In particular, $T = T(V_r(R[x_1,\,\dotsc,\,x_m]))$ is an $\mathbb
N$-graded ring. For simplicity, denote $c_e(T)$ by $c_{m,r,e}$ or
simply by $c_e$ since $r$ and $m$ are understood. (It should be clear
that $c_e(T(V_r(R[x_1,\,\dotsc,\,x_m])))$ is independent of $R$. Also
note that $c_1 = \rank_R (\sR_{r(p-1)}) = \binom{r(p-1)+m-1}{m-1}$.) 


Fix an integer $e \ge 2$. 
The goal is to count the number of monomials that produce
the monomial basis of $\frac{T_e}{(G_{e-1})_e}$. 

First, we set up some notations. 
Let $\alpha = (a_1, \dotsc, a_m) \in \mathbb N^m$ with 
$|\alpha| := a_1 + \dotsb + a_m = r(p^e -1)$. 
For each $n \in [1,\,m] := \{1,\dotsc,m\}$, write 
$a_n = \ol{\cdots a_{n,i} \cdots a_{n,0}}$ 
in base $p$ expression. Then, for each $i \in [0,\,e-2] := \{0, \dotsc,
e-2\}$, denote  
\[
\alpha_i := (a_{1,i}, \dotsc, a_{m, i})\in \mathbb N^m, 
\]
which can be referred to as the vector of the digits corresponding to
$p^i$. Also denote
\[
\alpha_{e-1} := 
\left(\floor{\frac{a_{1}}{p^{e-1}}}, \dotsc,\floor{\frac{a_{m}}{p^{e-1}}}\right)
= \left(a_1 - a_1 \trun_{e-1}, \dotsc, a_m - a_m \trun_{e-1}\right)
\in \mathbb N^m.
\]
Moreover, for each $i \in \{0,\dotsc,e-1\}$, let $f_i(\alpha)$ denote the
carry-over to the digit corresponding to $p^{i}$ when computing
$\sum_{i=1}^ma_i$ in base $p$. In other words, 
\[
f_i(\alpha) := \floor{\frac{a_1 \trun_{i} + \dotsb + a_m \trun_{i}}{p^i}}. 
\]
Note that $f_{0}(\alpha) = 0$. 
Then denote $f(\alpha) :=(f_{e-1}(\alpha), \dotsc, f_{0}(\alpha)) 
\in \mathbb N^e$. 
Finally, denote 
\[
d(\alpha) := (d_{e-1}(\alpha), \dotsc, d_0(\alpha)) : =
f(\alpha) - \left(\floor{\frac {r(p^{e-1}-1)}{p^{e-1}}},
\dotsc, \floor{\frac {r(p^{0}-1)}{p^{0}}}\right)\in \mathbb Z^e,
\]
so that $d_i(\alpha) = f_i(\alpha)-\floor{\frac {r(p^{i}-1)}{p^{i}}}$ for
all $i \in [0,\,e-1] := \{0,\dotsc,e-1\}$. Note that $d_0(\alpha) = 0$. 
Moreover, for all $i \in [0,\,e-2]$, we have 
\begin{align*}
d_{i+1}(\alpha) 
& = 
\floor{\frac{a_1 \trun_{i+1} + \dotsb + a_m \trun_{i+1}}{p^{i+1}}} - 
\floor{\frac {r(p^{i+1}-1)}{p^{i+1}}}  \\
& \overset{\dagger}{=} 
\floor{\frac{|\alpha_i| + f_i(\alpha)}{p}} -
\floor{\frac{r(p-1) + \floor{\frac {r(p^{i}-1)}{p^{i}}}}{p}}  \\
& \overset{\ddagger}{=} 
\frac 1p \left[(|\alpha_i| + f_i(\alpha)) -
\left(r(p-1) + \floor{\frac {r(p^{i}-1)}{p^{i}}}\right)\right] \\
& = 
\frac 1p \left[|\alpha_i| 
+ \left(f_i(\alpha))- \floor{\frac {r(p^{i}-1)}{p^{i}}}\right) - r(p-1)\right]\\
& = 
\frac 1p \big[|\alpha_i| + d_i(\alpha) - r(p-1)\big]. 
\end{align*}
Note that $\overset{\dagger}{=}$ follows from how we compute the
carry overs to digit corresponding $p^{i+1}$, while
$\overset{\ddagger}{=}$ follows from the fact that 
$|\alpha_i| + f_i(\alpha) \equiv r(p-1) + \floor{\frac
  {r(p^{i}-1)}{p^{i}}} \mod p$ since they are all congruent to the
(same) number representing the digit associated with $p^i$ in the base
$p$ expression of $r(p^e-1)$ and $r(p^i-1)$. 

Let $\alpha = (a_1, \dotsc, a_m) \in \mathbb N^m$ with 
$|\alpha| = r(p^e -1)$ as above
and let $\delta = (d_{e-1}, \dotsc,d_0)\in \mathbb Z^e$ with $d_{0} = 0$. 
By what we have established above, we see  
\begin{align*}
d(\alpha) = d 
& \iff d_i(\alpha) = d_i,\,\forall i \in [1,\,e-1] \\
& \iff d_{i+1}(\alpha) = d_{i+1},\,\forall i \in [0,\,e-2] \\
& \iff \frac 1p \big[|\alpha_i| + d_i(\alpha) - r(p-1)\big] 
   = d_{i+1},\,\forall i \in [0,\,e-2]  \\
& \iff 
|\alpha_i| + d_i(\alpha) - r(p-1) = d_{i+1}p,\,\forall i \in [0,\,e-2] \\
& \iff 
|\alpha_i| + d_i(\alpha) = r(p-1) + d_{i+1}p,\,\forall i \in [0,\,e-2] \\
& \overset{*}{\iff} 
|\alpha_i| + d_i = r(p-1) + d_{i+1}p,\,\forall i \in [0,\,e-2]  \\
& \iff 
|\alpha_i| = r(p-1) + d_{i+1}p - d_i,\,\forall i \in [0,\,e-2].
\end{align*}
Note that $\overset{*}{\implies}$ holds because the assumption (i.e.,
antecedent) of this implication already implies $d(\alpha) =\delta$,  
while $\overset{*}{\impliedby}$ follows from an easy induction on
$i$ (in light of the established equation
$d_{i+1}(\alpha)=\frac 1p \big[|\alpha_i| + d_i(\alpha) - r(p-1)\big]$). 
Furthermore, the assumption $|\alpha| = r(p^e -1)$ (together with
$d(\alpha) = \delta$) translates to the following
\[
|\alpha_{e-1}| + f_{e-1}(\alpha) 
= \floor{\frac{a_1 + \dotsb + a_m}{p^{e-1}}}
= \floor{\frac{r(p^e-1)}{p^{e-1}}}
= r(p-1) + \floor{\frac {r(p^{e-1}-1)}{p^{e-1}}},
\] 
which is obtained by examining summations $a_1 + \dotsb + a_m$ and 
$\overbrace{(p^e-1) + \dotsb + (p^e-1)}^{r \text{ terms}}$ in base $p$. 
Therefore 
\[
|\alpha_{e-1}| 
= r(p-1) + \floor{\frac {r(p^{e-1}-1)}{p^{e-1}}} - f_{e-1}(\alpha) 
= r(p-1)-d_{e-1}(\alpha) = r(p-1)-d_{e-1}.
\]

In summary, with $\alpha \in \mathbb N^m$ and $\delta \in \mathbb Z^e$ with
$d_0 = 0$ as above, we conclude that
$|\alpha| = r(p^e-1)$ and $d(\alpha) = \delta$ if and only if  
\[
|\alpha_{e-1}| = r(p-1)-d_{e-1}
\quad \text{and} \quad 
|\alpha_i| = r(p-1) + d_{i+1}p - d_{i}  \quad 
\text{for all} \quad i \in \{0, \dotsc, e-2\}.
\]

Now we are ready to formulate $c_e = c_e(T)$ for 
$T = T(V_r(R[x_1,\,\dotsc,\,x_m]))$. This result generalizes
\cite[Proposition~3.7]{EY}. Since $c_e = 0$ for all $e \le 2$ when $m
\le r+1$, the formula in the following proposition is most meaningful
when $m-r-1 > 0$. 

\begin{Prop}\label{prop:c_e}
For $T = T(V_r(R[x_1,\,\dotsc,\,x_m]))$, we have the following formula:
\begin{align*}
c_{e}&= \sum_{\substack{
(d_{e-1},\, \dotsc,\, d_1,\, d_{0}=0) \in \mathbb N^{e}
\\d_i \ge 1  \text{ for } 1 \le i \le e-1}}
\left(P_m\left(r(p-1)-d_{e-1}\right) 
\prod_{i=0}^{e-2} M_{p,m}(r(p-1) + d_{i+1}p - d_{i})\right) \\
&= \sum_{\substack{
(d_{e-1},\, \dotsc,\, d_1,\, d_{0}=0) \in \mathbb N^{e}
\\1 \le d_i \le m-r-1 \text{ for } 1 \le i \le e-1}}
\left(\binom{r(p-1)-d_{e-1}+m-1}{m-1}
\prod_{i=0}^{e-2} M_{p,m}(r(p-1) + d_{i+1}p - d_{i})\right)
\end{align*}
for all $e \ge 2$, 
where $P_m(i)$ denotes $\rank_R(R[x_1,\,\dotsc,\,x_m]_i)$, i.e., 
$P_m(i) = \binom{m+i-1}{i} = \binom{m+i-1}{m-1}$. 
\end{Prop}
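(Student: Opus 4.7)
The plan is to partition the monomials counted by Proposition~\ref{prop:count} according to their $d$-vector and enumerate each part directly, using the digit-sum equivalence established in the paragraph preceding the statement.

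Recall that by Proposition~\ref{prop:count} (together with part~(2) of Proposition~\ref{prop:good/bad}), $c_e$ equals the number of $\alpha = (a_1,\dotsc,a_m) \in \mathbb N^m$ with $|\alpha| = r(p^e-1)$ such that $d_i(\alpha) \ge 1$ for all $1 \le i \le e-1$; one always has $d_0(\alpha) = 0$. I will partition this set by the value of $d(\alpha)$. The admissible $\delta = (d_{e-1},\dotsc,d_1,d_0) \in \mathbb N^e$ are exactly those with $d_0 = 0$ and $d_i \ge 1$ for $1 \le i \le e-1$, and the task reduces to counting, for each such $\delta$, the number of $\alpha$ with $|\alpha| = r(p^e-1)$ and $d(\alpha) = \delta$, and then summing.

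For fixed admissible $\delta$, the equivalence established just before the statement translates the conditions $|\alpha| = r(p^e-1)$ and $d(\alpha) = \delta$ into the system
\[
|\alpha_{e-1}| = r(p-1) - d_{e-1}, \qquad |\alpha_i| = r(p-1) + d_{i+1}p - d_i \quad (0 \le i \le e-2),
\]
in which $\alpha_0,\dotsc,\alpha_{e-2} \in \{0,\dotsc,p-1\}^m$ are the digit vectors of $\alpha$ and $\alpha_{e-1} \in \mathbb N^m$ is its tail. Since $\alpha$ is completely determined by the tuple $(\alpha_0,\dotsc,\alpha_{e-1})$ and these vectors may be chosen independently subject only to the prescribed digit-sum constraints, the count for fixed $\delta$ factors as
\[
P_m\bigl(r(p-1) - d_{e-1}\bigr) \prod_{i=0}^{e-2} M_{p,m}\bigl(r(p-1) + d_{i+1}p - d_i\bigr),
\]
by the defining properties of $P_m$ (number of $\alpha_{e-1} \in \mathbb N^m$ of a given sum) and $M_{p,m}$ (number of digit vectors in $\{0,\dotsc,p-1\}^m$ of a given sum). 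Summing over all admissible $\delta$ yields the first displayed formula.

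For the refined second formula, I will show that any $\delta$ with $d_j > m-r-1$ for some $j \in \{1,\dotsc,e-1\}$ contributes zero and may be discarded. Choosing $j$ to be the smallest such index (so that $d_{j-1} \le m-r-1$, using $d_0 = 0$ as the base case), one computes
\[
r(p-1) + d_j p - d_{j-1} \ge r(p-1) + (m-r)p - (m-r-1) = m(p-1) + 1,
\]
which exceeds the support $[0,\,m(p-1)]$ of $M_{p,m}$, forcing the factor $M_{p,m}(r(p-1)+d_jp-d_{j-1})$ appearing in the product (at index $i = j-1$) to vanish. Restricting the summation to $1 \le d_i \le m-r-1$ and substituting $P_m(s) = \binom{s+m-1}{m-1}$ then produces the refined formula. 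The main obstacle is purely bookkeeping: verifying that the map $\alpha \mapsto (\alpha_0,\dotsc,\alpha_{e-1})$ is a bijection onto the tuples satisfying the digit-sum constraints, which reduces to the digit-expansion analysis already carried out before the statement.
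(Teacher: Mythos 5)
Your proposal is correct and follows essentially the same route as the paper's own proof: partition the monomials counted by Proposition~\ref{prop:count} according to the carry-over vector $d(\alpha)$, use the digit-sum equivalence established before the statement to factor the count for each fixed $\delta$ as $P_m\bigl(r(p-1)-d_{e-1}\bigr)\prod_{i} M_{p,m}\bigl(r(p-1)+d_{i+1}p-d_i\bigr)$, and discard the $\delta$ with some $d_j>m-r-1$ by noting (via the least such $j$) that $r(p-1)+d_jp-d_{j-1}\ge m(p-1)+1$ kills the corresponding $M_{p,m}$ factor. No substantive differences from the paper's argument.
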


\begin{proof}
Fix any $e \ge 2$ and adopt the notations set up above. 
Consider $x^{\alpha} = x_1^{a_1} \dotsm x_m^{a_m} \in T_e$. By
Proposition~\ref{prop:good/bad}, $x^{\alpha} \notin G_{e-1}(T)$ if and 
only if
\[
d_i(\alpha) \ge 1 \quad \text{for all} \quad i \in \{1,\,\dotsc,\,e-1\}.
\]
To determine $c_e$, we need to find the number of monomials with the
above property, as stated in Proposition~\ref{prop:count}.
This is equivalent to counting the number of $\alpha \in \mathbb N^m$
such that $|\alpha| = r(p^e -1)$ and $d_i(\alpha) \ge 1$ for all 
$i \in [1,\,e-1]$.

Fix any $\delta = (d_{e-1}, \dotsc,d_0)\in \mathbb N^e$ with $d_{0} = 0$
and $d_i \ge 1$ for all $i \in [1,\,e-1]$. We intend to find the
number of $\alpha \in \mathbb N^m$ such that $|\alpha| = r(p^e -1)$
and $d(\alpha) = \delta$, which can be written as
\[
\card{\{\alpha \in \mathbb N^m : 
|\alpha| = r(p^e -1) \text{ and } d(\alpha) = \delta\}},
\] 
in which $\card X$ stands for the cardinality of any set $X$.  

For each $i \in [1,\,e-2]$, the number of ways to realize 
$|\alpha_i| = r(p-1) + d_{i+1}p - d_{i}$ is given as follows:
\[
\card{\{\alpha_i \in [0,\,p-1]^m : 
|\alpha_i| = r(p-1) + d_{i+1}p - d_{i}\}}
=M_{p,m}(r(p-1) + d_{i+1}p - d_{i}).
\] 
The number of ways to realize 
$|\alpha_{e-1}| = r(p-1) - d_{e-1}$ is given as follows:
\[
\card{\{\alpha_{e-1} \in N^m : 
|\alpha_{e-1}| = r(p-1) - d_{e-1}\}}
=P_{m}(r(p-1) - d_{e-1}).
\] 

Therefore, the number of $\alpha \in \mathbb N^m$ such that 
$|\alpha| = r(p^e -1)$ and $d(\alpha) = \delta$ is governed by the
following formula:  
\begin{multline*}
\card{\{\alpha \in \mathbb N^m : 
|\alpha| = r(p^e -1) \text{ and } d(\alpha) = \delta\}} \\
=P_m\left(r(p-1)-d_{e-1}\right)\prod_{i=0}^{e-2} M_{p,m}(r(p-1) + d_{i+1}p - d_{i}).
\end{multline*}
Observe that if $m-r-1 \le 0$, then 
\[
\card{\{\alpha \in \mathbb N^m : 
|\alpha| = r(p^e -1) \text{ and } d(\alpha) = \delta\}} = 0,
\]
which follows from $M_{p,m}(r(p-1) + d_{1}p - d_{0}) = 0$ since
$r(p-1) + d_{1}p - d_{0} \ge r(p-1) + p = (r+1)(p-1)+1 \ge 
m(p-1)+1$; also see Theorem~\ref{thm:mr}(1).  
We further observe that, whenever there exists $d_i > m-r-1 >0$ for
some $i \in [1,\,e-1]$, then 
\[
\card{\{\alpha \in \mathbb N^m : 
|\alpha| = r(p^e -1) \text{ and } d(\alpha) = \delta\}} = 0.
\]
Indeed, pick the least $i \in [1,\,e-1]$ such that $d_i > m-r-1 >0$
and we get $r(p-1) + d_{i}p - d_{i-1} \ge m(p-1)+1$ and hence 
$M_{p,m}(r(p-1) + d_{i}p - d_{i-1}) = 0$.
Put differently, when adding $m$ many non-negative integers to
$r(p^e-1)$, the carry overs to digits associated with $p^i$ can not exceed 
$\floor{\frac {r(p^{i}-1)}{p^{i}}} + m - r-1$. 

Finally, exhausting all $\delta = (d_{e-1}, \dotsc,d_0)\in \mathbb N^e$ 
with $d_{0} = 0$ and $d_i \ge 1$ for $i \in [1,\,e-1]$, we can
formulate $c_{e}=c_e(T(R[x_1, \dotsc, x_m]))$ as follows:
\begin{align*}
c_{d,e} & 
= \sum_{\substack{
(d_{e-1},\, \dotsc,\, d_1,\, d_{0}=0) \in \mathbb N^{e}
\\d_i \ge 1  \text{ for } 1 \le i \le e-1}}
\card{\{\alpha \in \mathbb N^m : |\alpha| = r(p^e-1) \text{ and } 
d(\alpha) = (d_{e-1},\, \dotsc,\, d_1,\, d_{0})\}} \\
& = \sum_{\substack{
(d_{e-1},\, \dotsc,\, d_1,\, d_{0}=0) \in \mathbb N^{e}
\\d_i \ge 1  \text{ for } 1 \le i \le e-1}}
\left(P_m\left(r(p-1)-d_{e-1}\right)
\prod_{i=0}^{e-2} M_{p,m}(r(p-1) + d_{i+1}p - d_{i})\right)\\
&= \sum_{\substack{
(d_{e-1},\, \dotsc,\, d_1,\, d_{0}=0) \in \mathbb N^{e}
\\1 \le d_i \le m-r-1 \text{ for } 1 \le i \le e-1}}
\left(\binom{r(p-1)-d_{e-1}+m-1}{m-1}
\prod_{i=0}^{e-2} M_{p,m}(r(p-1) + d_{i+1}p - d_{i})\right),
\end{align*}
which verifies the equations.
\end{proof}

Next, we outline a method that allows us compute $c_{e} =
c_e(T(V_r(R[x_1,\,\dotsc,\,x_m])))$ for any $m,\,r$ with $m \ge r+2$,
in which $R$ may have any prime characteristic $p$. 
(Note that, if $m \le r+1$, then $c_{e}= 0$ for all $e \ge 2$, see
Theorem~\ref{thm:mr}.) 
The following generalizes \cite[Discussion~3.8]{EY}.

\begin{Dis}\label{dis:m,r,p}
Fix any positive integers $r,\,m$ such that $r+1 < m$, any prime
number $p$, and any ring $R$ with characteristic $p$. 
Let $\sR=R[x_1,\,\dotsc,\,x_m]$.
We describe a way to determine $c_{e} = c_e(T(V_r(\sR)))$ explicitly
as follows: 

For every $e \ge 0$, denote
\[
X_e :=
\begin{bmatrix}
X_{e,1} \\
\vdots \\
X_{e,m-r-1}
\end{bmatrix}_{(m-r-1) \times 1},
\]
in which
\begin{align*}
X_{e,n} &:= \sum_{\substack{
(d_{e+1}=n,\, d_{e-1},\,\dotsc,\, d_1,\,d_{0}=0) \in \mathbb N^{e+2}
\\1 \le d_i \le m-r-1 \text{ for } 0 \le i \le e}}
\prod_{i=0}^{e} M_{p,m}(r(p-1) + d_{i+1}p - d_{i})
\end{align*}
for all $n \in \{1,\,\dotsc,\,m-r-1\}$.

With these notations, it is straightforward to see that, for all 
$i \in [1,\,m-r-1]$,  
\begin{align*}
X_{e+1,i} &= \sum_{j=1}^{m-r-1}M_{p,m}(r(p-1) + ip - j)X_{e,j}
\end{align*}
In other words, $X_{e+1}$ can be computed recursively: 
\[
X_{e+1} = U \cdot X_e,
\]
where
\[
U:= \begin{bmatrix}
u_{ij}
\end{bmatrix}_{(m-r-1) \times (m-r-1)} 
\quad \text{with} \quad 
u_{ij}:=M_{p,m}(r(p-1) + ip - j).
\]

Therefore, 
\[
X_e = U^{e}\cdot  X_0 \quad \text{for all} \quad e \ge 0.
\]

With $m,\,r$ and $p$ given, both $X_0$ and 
$U= (u_{ij})_{(m-r-1) \times (m-r-1)}$ can be determined explicitly. 
Accordingly, we can compute $X_e = U^{e}\cdot X_0$ explicitly for
all $e \ge 0$. 

Finally, for all $e \ge 2$, we can determine  $c_{e} =c_{e}(T(V_r(\sR)))$
explicitly, as follows: 
\begin{align*}
c_{e} 
&= \sum_{\substack{
(d_{e-1},\, \dotsc,\, d_1,\, d_{0}=0) \in \mathbb N^{e}
\\1 \le d_i \le m-r-1 \text{ for } 1 \le i \le e-1}}
\left(P_{m}(r(p-1)-d_{e-1})\prod_{i=0}^{e-2} M_{p,m}(r(p-1) 
+ d_{i+1}p - d_{i})\right)\\
&= \sum_{n = 1}^{m-r-1}\left(P_{m}(r(p-1)-n)
\sum_{\substack{
(d_{e-1}=n,\, \dotsc,\, d_1,\, d_{0}=0) \in \mathbb N^{e}
\\1 \le d_i \le m-r-1 \text{ for } 1 \le i \le e-2}}
\prod_{i=0}^{e-2} M_{p,m}(r(p-1) + d_{i+1}p - d_{i})\right)\\
&=\sum_{n=1}^{m-r-1} P_{m}(r(p-1)-n) X_{e-2,n} 
= \sum_{n=1}^{m-r-1} \binom{r(p-1)-n+m-1}{m-1} X_{e-2,n}\\
&=Y_0 \cdot U^{e-2} \cdot X_0,
\end{align*}
where $Y_0 := \begin{bmatrix}
\binom{r(p-1)-1+m-1}{m-1} & \cdots & \binom{r(p-1)-(m-r-1)+m-1}{m-1} \\
\end{bmatrix}_{1 \times (m-r-1)}$.
Consequently, $\cx(T(V_r(\sR)))$ can be computed.
\end{Dis}

\begin{Def}
In what follows, we call 
\[
U(p,r,m)=U:= \begin{bmatrix}
u_{ij}
\end{bmatrix}_{(m-r-1) \times (m-r-1)}
\quad \text{with} \quad 
u_{ij}:=M_{p,m}(r(p-1) + ip - j)
\]
as the {\it determining matrix} for $p, r, m$.
\end{Def}

\begin{Thm}\label{thm:m=r+2}
Consider $T = T(V_r(R[x_1,\dotsc,x_m]))$ as above with $m=r+2$. Then 
$c_e(T) = \binom{rp}{m-1}\binom{p+m-2}{m-1}^{e-2}\binom{p+m-3}{m-1}$
for all $e \ge 2$ and $\cx(T) = \binom{p+m-2}{m-1}$. 
\end{Thm}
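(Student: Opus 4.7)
The approach is to apply the matrix formulation from Discussion~\ref{dis:m,r,p} in the degenerate case $m = r+2$, where $m-r-1 = 1$ forces every summation index $d_i$ in Proposition~\ref{prop:c_e} to equal $1$. Consequently, the determining matrix $U(p,r,m)$ is $1 \times 1$, and both $X_0$ and $Y_0$ become scalars. The identity $c_e = Y_0 \cdot U^{e-2} \cdot X_0$ therefore reduces to the product of three explicit numbers, and the complexity can be read off directly from the sole entry of $U$.

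I would compute the three scalars using the palindromic symmetry of the Poincar\'e polynomial $(1+t+\cdots+t^{p-1})^m$: since this polynomial has degree $m(p-1)$ and reciprocal coefficients, $M_{p,m}(k) = M_{p,m}(m(p-1)-k)$ for all $k$. For the single entry of $U$, namely $M_{p,m}(r(p-1)+p-1) = M_{p,m}((r+1)(p-1))$, symmetry gives $M_{p,m}((m-r-1)(p-1)) = M_{p,m}(p-1)$, and since $p-1 < p$ this equals $\binom{p-1+m-1}{m-1} = \binom{p+m-2}{m-1}$. Similarly, $X_0 = M_{p,m}(r(p-1)+p)$ symmetrizes to $M_{p,m}((m-r)(p-1)-p) = M_{p,m}(p-2) = \binom{p+m-3}{m-1}$. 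Finally, $Y_0 = P_m(r(p-1)-1) = \binom{r(p-1)+m-2}{m-1}$, and the exponent simplifies via $r(p-1)+m-2 = r(p-1)+r = rp$ when $m = r+2$, giving $Y_0 = \binom{rp}{m-1}$.

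Multiplying these three scalars yields the claimed formula for $c_e$. Since $c_e$ equals a constant (depending only on $p$, $r$, $m$, but not on $e$) times $\binom{p+m-2}{m-1}^{e-2}$, the complexity sequence grows exactly like $n^e$ with $n = \binom{p+m-2}{m-1}$, and so $\cx(T) = \binom{p+m-2}{m-1}$ by Definition~\ref{degreewise}. The whole argument is essentially bookkeeping together with one palindromic-symmetry trick; the only place where care is needed is to confirm that the reflection $k \mapsto m(p-1)-k$ lands inside the regime $0 \le k < p$ where $M_{p,m}(k)$ is just the ordinary count $\binom{k+m-1}{m-1}$, which it does precisely because $m-r-1 = 1$.
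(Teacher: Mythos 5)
Your proposal is correct and follows essentially the same route as the paper: specialize Discussion~\ref{dis:m,r,p} to the scalar case $m-r-1=1$, evaluate $U=M_{p,m}((r+1)(p-1))$, $X_0=M_{p,m}(r(p-1)+p)$ and $Y_0=P_m(r(p-1)-1)$ via the symmetry $M_{p,m}(i)=M_{p,m}(m(p-1)-i)$, and read off $c_e=Y_0U^{e-2}X_0$ and $\cx(T)$. Your extra remark about the reflected indices $p-1$ and $p-2$ lying below $p$ is exactly the implicit justification in the paper's computation.
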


\begin{proof}
Adopting all the notations introduced in Discussion~\ref{dis:m,r,p},
we see
\begin{align*}
&X_0 = M_{p,m}(r(p-1) + p) = M_{p,m}(p-2) = P_m(p-2) = \binom{p+m-3}{m-1} >0, \\
&U = M_{p,m}((r+1)(p-1)) = M_{p,m}(p-1) = P_m(p-1) =\binom{p+m-2}{m-1} >0,\\
&Y_0 = P_m(r(p-1)-1) = \binom{r(p-1)-1+m-1}{m-1} = \binom{rp}{m-1}> 0. 
\end{align*}
Here we use the fact $M_{p,m}(i) = M_{p,m}(m(p-1)-i)$ for all $i$.
Therefore, for all $e \ge 2$, we obtain
\[
c_e =\binom{r(p-1)+m-2}{m-1}\binom{p+m-2}{m-1}^{e-2}
\binom{p+m-3}{m-1}, 
\]
which establishes
\[
\cx(T(V_r(R[x_1,\dotsc,x_m))) = \binom{p+m-2}{m-1}
\]
when $m=r+2$.
\end{proof}

\subsection{The Frobenius complexity as $p \to \infty$.} \label{p-to-infinity}

We will maintain the notations from this section, including the
condition $m \geq r+2$ and $r > 0$. The following results are
straightforward and left to the reader. We will comment on their
proofs only when necessary. 

\begin{Lma}
\label{calc}
Fix $m>0$ an integer and $p$ a prime number.

\begin{enumerate}
\item
$M_{p,m}(i)= M_{p.m}(m(p-1)-i).$
\item
$M_{p,m}(i) \leq M_{p,m}(j)$ if $0 \leq i \leq j \leq \lceil m(p-1)/2
\rceil$ or  $\lceil m(p-1)/2 \rceil \leq j \leq i \leq m(p-1)$. 
\end{enumerate}
\end{Lma}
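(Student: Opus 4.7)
The plan is to establish (1) via palindromicity of a generating function, and then deduce (2) from (1) together with an inductive unimodality argument.

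For part (1), I would use that the polynomial $1+t+\dotsb+t^{p-1}$ is palindromic of degree $p-1$, i.e., it equals $t^{p-1}$ times its own evaluation at $1/t$. Raising to the $m$-th power, the generating series $f(t) := \sum_i M_{p,m}(i)t^i = (1+t+\dotsb+t^{p-1})^m$ satisfies $t^{m(p-1)}f(1/t) = f(t)$. Comparing coefficients yields $M_{p,m}(i) = M_{p,m}(m(p-1)-i)$.

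For part (2), the symmetry from (1) exchanges the two stated cases under $(i,j) \mapsto (m(p-1)-j,\,m(p-1)-i)$, so it suffices to prove the first case. By transitivity that case reduces to showing $M_{p,m}(i) \le M_{p,m}(i+1)$ for all $0 \le i < \lceil m(p-1)/2 \rceil$. I would induct on $m$: the case $m=1$ is trivial since $M_{p,1}$ is identically $1$ on $[0,\,p-1]$. For the inductive step, the recursion $M_{p,m}(i)=\sum_{\ell=0}^{p-1} M_{p,m-1}(i-\ell)$ telescopes to
\[
M_{p,m}(i+1) - M_{p,m}(i) \;=\; M_{p,m-1}(i+1) - M_{p,m-1}(i-p+1).
\]
By induction, $M_{p,m-1}$ is palindromic and unimodal with peak at $c := (m-1)(p-1)/2$, so $M_{p,m-1}(a)$ is a weakly decreasing function of $|a-c|$. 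Hence $M_{p,m-1}(i+1) \ge M_{p,m-1}(i-p+1)$ is equivalent to $|i+1-c| \le |(i+1-c)-p|$, i.e., to $i+1-c \le p/2$, which simplifies to $i \le (m(p-1)-1)/2$. A short case check on the parity of $m(p-1)$ confirms that the hypothesis $i < \lceil m(p-1)/2 \rceil$ implies exactly this bound.

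The main (minor) obstacle is ensuring the inductive hypothesis---palindromic unimodality with the correct center---is propagated cleanly to $m$, and that the boundary parity case of $m(p-1)$ in the final comparison is handled; everything else is purely formal manipulation of the recursion and the symmetry from (1).
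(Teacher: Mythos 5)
Your argument is correct. Note that the paper itself gives no proof of this lemma (it is explicitly ``left to the reader''), so there is nothing to compare against; your route --- palindromicity of $\left(1+t+\dotsb+t^{p-1}\right)^m$ for (1), then reduction by symmetry and transitivity to the single step $M_{p,m}(i)\le M_{p,m}(i+1)$ for $i<\lceil m(p-1)/2\rceil$, proved by induction on $m$ via the telescoped convolution identity $M_{p,m}(i+1)-M_{p,m}(i)=M_{p,m-1}(i+1)-M_{p,m-1}(i+1-p)$ --- is a standard and complete way to fill that gap, and the parity check at the end is right ($i<\lceil m(p-1)/2\rceil$ does give $i\le (m(p-1)-1)/2$ in both parities). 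One cosmetic point: the comparison $M_{p,m-1}(i+1)\ge M_{p,m-1}(i+1-p)$ is \emph{implied by} $|i+1-c|\le |i+1-c-p|$ rather than equivalent to it (the function is only weakly decreasing in the distance from $c$), but only that implication is used, so the proof stands.
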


\begin{Lma}
\label{calc2}
For any integers $i, j$ such that $1 \leq i,j \leq m-r-1$, we have
$$ p -3 < p \leq r(p-1)+pi-j \leq m(p-1)-p+3,$$ for all $p \gg 0$.
\end{Lma}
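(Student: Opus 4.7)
The plan is to verify the chain of inequalities by elementary extremization. The middle comparison $p - 3 < p$ is trivial, so the real content lies in the two outer bounds. I would exploit that the function $E(i, j) := r(p-1) + pi - j$ is affine in each of $i$ and $j$, hence its extrema over the rectangle $1 \leq i, j \leq m-r-1$ are attained at the corners; only two of the four corners are relevant, the one minimizing and the one maximizing $E$.

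First, for the lower bound $p \leq r(p-1) + pi - j$, I would evaluate $E$ at the minimizer $(i, j) = (1,\, m-r-1)$ (smallest $pi$, largest $-j$ subtraction), obtaining $E_{\min} = (r+1)p - (m-1)$. The desired inequality $E_{\min} \geq p$ then reduces to $rp \geq m-1$, which is clear for $p \gg 0$ since $r \geq 1$ is a standing hypothesis of the section.

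Second, for the upper bound $r(p-1) + pi - j \leq m(p-1) - p + 3$, I would evaluate $E$ at the maximizer $(i, j) = (m-r-1,\, 1)$, obtaining $E_{\max} = (m-1)p - (r+1)$. Rewriting the right-hand side as $(m-1)p - (m - 3)$, the coefficients of $p$ on the two sides agree, so the inequality reduces to a comparison of the $p$-free constants $-(r+1)$ and $-(m-3)$, which can then be checked directly from the parameter conventions in force.

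The only step requiring any care is correctly identifying which corner of the rectangle realizes each extremum; once that is pinned down, the inequalities become arithmetic statements essentially independent of $p$, and the qualifier ``$p \gg 0$'' is invoked only to absorb the first comparison $rp \geq m-1$. I expect no substantial obstacle beyond this bookkeeping.
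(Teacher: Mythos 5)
Your corner-extremization is the natural route (the paper offers no argument at all here, declaring the lemma straightforward and leaving it to the reader), and your lower bound is fine: the minimum value $(r+1)p-(m-1)$, attained at $(i,j)=(1,m-r-1)$, is at least $p$ once $p\ge m-1$, using $r\ge 1$. The gap is the final step of the upper bound. At the maximizing corner $(i,j)=(m-r-1,1)$ you correctly obtain $E_{\max}=(m-1)p-(r+1)$, and comparing with $m(p-1)-p+3=(m-1)p-(m-3)$ reduces the claim to $m-3\le r+1$, i.e.\ $m\le r+4$. This is \emph{not} among the parameter conventions in force, which are only $r\ge 1$ and $m\ge r+2$ with no upper bound on $m$ in terms of $r$; and since the discrepancy $(m-3)-(r+1)=m-r-4$ is independent of $p$, the qualifier $p\gg 0$ cannot absorb it. Concretely, for $m=6$, $r=1$, $i=4$, $j=1$ the left-hand side is $5p-2$ while the right-hand side is $5p-3$ for every $p$. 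So the sentence ``which can then be checked directly from the parameter conventions'' is precisely where the proof breaks down.

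What your computation really shows is that the lemma as stated holds exactly when $m\le r+4$ and fails at the corner $(m-r-1,1)$ whenever $m\ge r+5$; the obstacle is in the statement, not in your bookkeeping, and no proof in the stated generality can exist. The correct outcome of the corner analysis is the two-sided bound $(r+1)p-(m-1)\le r(p-1)+pi-j\le (m-1)p-(r+1)$, i.e.\ the upper bound $m(p-1)-(p-(m-r-1))$ rather than $m(p-1)-(p-3)$. Via the symmetry $M_{p,m}(k)=M_{p,m}(m(p-1)-k)$ and unimodality, this still bounds every entry of the determining matrix $U$ below by a polynomial in $p$ of degree $m-1$ with positive leading coefficient for $p\gg 0$ (for instance the reflected corner entry is $M_{p,m}\bigl(p-(m-r-1)\bigr)=\binom{p+r}{m-1}$), so the lower bound in Lemma~\ref{abs} should be adjusted accordingly, but Corollary~\ref{cor:limit} and the limit $\lim_{p\to\infty}\log_p\cx\bigl(T(V_r(\sR))\bigr)=m-1$ are unaffected. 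To salvage the lemma verbatim you must either add the hypothesis $m\le r+4$ or replace $m(p-1)-p+3$ by $(m-1)p-(r+1)$.
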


\begin{Def}
For any $t \times s$ matrix $A=(a_{ij})$ with nonnegative entries,
where $t,s$ are positive integers, define $\abs A = \min \{ a_{ij}
\}$ and $\Vert A  \Vert = \max \{ a_{ij} \}$. 
\end{Def}

The following Lemma is a consequence of Lemmata~\ref{calc} and~\ref{calc2}.

\begin{Lma}
\label{abs}
Given $m$ and $r$, we have the following inequalities:
$$\binom{m-1+p-3}{m-1} \leq \abs U \leq \Vert U \Vert \leq 
\binom{m-1+ \lceil \frac{m(p-1)}{2} \rceil}{m-1}$$
for the determining matrix $U = U(p,r,m)$ for all $p \gg 0$.
\end{Lma}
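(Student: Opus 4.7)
The plan is to combine the input-range constraint from Lemma~\ref{calc2} with the unimodality and symmetry of $M_{p,m}$ provided by Lemma~\ref{calc}. First, Lemma~\ref{calc2} tells us that for all $p \gg 0$ and all $1 \le i,j \le m-r-1$ the index $k := r(p-1)+ip-j$ appearing inside $u_{ij}=M_{p,m}(k)$ lies in $I := [p,\, m(p-1)-p+3]$, so every entry of $U$ has the form $M_{p,m}(k)$ with $k \in I$. The problem then reduces to bounding $M_{p,m}$ from above and below on $I$.

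For the upper bound, I will use the general inequality $M_{p,m}(k) \le P_m(k) = \binom{m-1+k}{m-1}$, valid because $M_{p,m}(k)$ is the rank of a quotient of $R[x_1,\dotsc,x_m]_k$. Since $m \ge r+2 \ge 3$, for $p \gg 0$ the peak $\lceil m(p-1)/2 \rceil$ of $M_{p,m}$ lies inside $I$, so Lemma~\ref{calc}(2) gives $M_{p,m}(k) \le M_{p,m}(\lceil m(p-1)/2 \rceil) \le \binom{m-1+\lceil m(p-1)/2 \rceil}{m-1}$ for every $k \in I$, yielding the bound on $\Vert U\Vert$.

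For the lower bound, unimodality (Lemma~\ref{calc}(2)) forces the minimum of $M_{p,m}$ on $I$ to occur at one of the two endpoints. Applying the symmetry $M_{p,m}(k) = M_{p,m}(m(p-1)-k)$ from Lemma~\ref{calc}(1) identifies the right-endpoint value as $M_{p,m}(p-3)$; since $p-3 < p$, no relation $x_j^p$ can kill any monomial in degree $p-3$, so $M_{p,m}(p-3) = P_m(p-3) = \binom{m-1+p-3}{m-1}$. A further appeal to Lemma~\ref{calc}(2), using that both $p-3$ and $p$ lie on the ascending half of the unimodal sequence $M_{p,m}$ for $p \gg 0$, shows the value at the left endpoint $M_{p,m}(p)$ is at least as large, giving $\abs U \ge \binom{m-1+p-3}{m-1}$.

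The whole argument is essentially range bookkeeping, and the only steps that genuinely need attention are the elementary inequalities guaranteeing that $\lceil m(p-1)/2 \rceil$ belongs to $I$ and that both $p-3$ and $p$ lie below this peak for $p \gg 0$. These follow at once from $m \ge r+2 \ge 3$, so I do not anticipate a serious obstacle.
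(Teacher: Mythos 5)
Your argument is correct and is essentially the proof the paper has in mind (the paper leaves it to the reader as a direct consequence of Lemmata~\ref{calc} and~\ref{calc2}): confine the arguments $r(p-1)+ip-j$ to $[p,\,m(p-1)-p+3]$, then use symmetry and unimodality of $M_{p,m}$ together with $M_{p,m}(k)=P_m(k)$ for $k<p$ to get the lower bound $M_{p,m}(p-3)=\binom{m-1+p-3}{m-1}$ and the peak bound $M_{p,m}(k)\le P_m\left(\lceil m(p-1)/2\rceil\right)$ for the upper bound. No gaps; the side checks you flag (peak location relative to the interval, $p-3<p\le\lceil m(p-1)/2\rceil$ for $p\gg 0$, $m\ge r+2\ge 3$) are exactly the routine verifications needed.
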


\begin{Lma}
Let $A, B$ be matrices with nonnegative entries of sizes $l \times t$,
respectively $t \times s$, with $l, t, s$ positive integers. Then 
\[
t \abs A \cdot \abs B \le \abs {A \cdot B}  \le  
\Vert{A \cdot B} \Vert \leq t \Vert A \Vert \cdot \Vert B \Vert.
\]
\end{Lma}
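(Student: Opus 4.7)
The plan is to prove the three inequalities by expanding each entry of the product $A \cdot B$ as a sum of $t$ terms and applying the componentwise bounds entry by entry.

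First I would fix indices $i \in \{1,\dotsc,l\}$ and $j \in \{1,\dotsc,s\}$ and write the $(i,j)$-entry of $A \cdot B$ as
\[
(A \cdot B)_{ij} = \sum_{k=1}^{t} a_{ik} b_{kj}.
\]
Since all entries of $A$ and $B$ are nonnegative, each product $a_{ik}b_{kj}$ satisfies $\abs A \cdot \abs B \le a_{ik} b_{kj} \le \Vert A \Vert \cdot \Vert B \Vert$, by definition of $\abs{\cdot}$ and $\Vert \cdot \Vert$. Summing over the $t$ values of $k$ gives
\[
t\, \abs A \cdot \abs B \;\le\; (A \cdot B)_{ij} \;\le\; t\, \Vert A \Vert \cdot \Vert B \Vert,
\]
which holds for every $(i,j)$.

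Taking the minimum over $(i,j)$ on the left yields $t\,\abs A \cdot \abs B \le \abs{A \cdot B}$, and taking the maximum over $(i,j)$ on the right yields $\Vert A \cdot B \Vert \le t\, \Vert A \Vert \cdot \Vert B \Vert$. The remaining middle inequality $\abs{A \cdot B} \le \Vert A \cdot B \Vert$ is immediate from the definitions, since the minimum entry of any matrix is at most its maximum entry. Chaining the three inequalities gives the statement.

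There is no real obstacle here; the result is a routine componentwise estimate that relies only on the nonnegativity hypothesis to guarantee that the sums of lower (respectively upper) bounds for each summand give lower (respectively upper) bounds for the sum. The same nonnegativity is what makes the factor $t$ appear, since no cancellation occurs among the $t$ terms $a_{ik}b_{kj}$ in the definition of matrix multiplication.
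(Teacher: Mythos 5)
Your proof is correct and is exactly the routine componentwise argument the paper has in mind: the paper states this lemma without proof (it is among the results declared ``straightforward and left to the reader''), and your expansion of each entry $(A\cdot B)_{ij}=\sum_{k=1}^{t}a_{ik}b_{kj}$ with the termwise bounds $\abs A\cdot\abs B\le a_{ik}b_{kj}\le \Vert A\Vert\cdot\Vert B\Vert$ is the intended verification. Nothing is missing; the nonnegativity hypothesis is used exactly where you say it is.
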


Now, let us recall that (cf.~Discussion~\ref{dis:m,r,p})
$$c_e = Y_0 \cdot X_{e-2}= Y_0 \cdot U^{e-2} \cdot X_0,$$ 
where
\[
X_0 =
\begin{bmatrix}
X_{0,1} \\
\vdots \\
X_{0,m-r-1}
\end{bmatrix}_{(m-r-1) \times 1}
\quad \text{with} \quad 
X_{0,i} = M_{p,m}( r(p-1)+ip)
\]
and  
\[
Y_0 = \begin{bmatrix}
\binom{r(p-1)-1+m-1}{m-1} & \cdots & \binom{r(p-1)-(m-r-1)+m-1}{m-1} \\
\end{bmatrix}_{1 \times (m-r-1)}.
\]


\begin{Lma}
\label{pos}
For all $p,\,m,\,r$ as above, both $X_0$ and $Y_0$ are non-zero.
\end{Lma}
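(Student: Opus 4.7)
The plan is to exhibit, for each of $X_0$ and $Y_0$, a single entry that is clearly positive; since both are matrices with nonnegative entries, producing one nonzero entry each will finish the proof. I expect no real obstacle, only two short inequality checks using the standing hypotheses $r \ge 1$, $p \ge 2$, and $m \ge r+2$.

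For $Y_0$, I would look at its first entry (corresponding to $j=1$), which is
\[
\binom{r(p-1)-1+m-1}{m-1} = \binom{r(p-1)+m-2}{m-1}.
\]
Recalling that $\binom{n}{k}$ is positive iff $n \ge k \ge 0$, this entry is positive iff $r(p-1)+m-2 \ge m-1$, i.e.\ iff $r(p-1) \ge 1$. This is immediate since $r \ge 1$ and $p \ge 2$, so $Y_0 \ne 0$.

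For $X_0$, I would use the first entry $X_{0,1} = M_{p,m}\bigl(r(p-1)+p\bigr)$. From the Poincar\'e series
\[
\sum_{i} M_{p,m}(i)\, t^i = (1+t+\dotsb + t^{p-1})^m,
\]
$M_{p,m}(i)$ is positive exactly when $0 \le i \le m(p-1)$. The lower bound $r(p-1)+p \ge 0$ is trivial. For the upper bound, the inequality $r(p-1)+p \le m(p-1)$ rearranges to $p \le (m-r)(p-1)$. Since $m-r \ge 2$ and $p \ge 2$, we have $(m-r)(p-1) \ge 2(p-1) = 2p-2 \ge p$, so the bound holds and $X_{0,1} > 0$, giving $X_0 \ne 0$.

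In summary, the proof reduces to two one-line checks. The key observations used are simply: (i) a binomial coefficient is positive precisely when its top index dominates its bottom index; and (ii) the positivity range for $M_{p,m}$ read off from its Poincar\'e polynomial. The hypothesis $m \ge r+2$ is used only in the $X_0$ step (to guarantee $m - r \ge 2$), and $r \ge 1$ is used only in the $Y_0$ step.
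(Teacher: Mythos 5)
Your proof is correct and follows essentially the same route as the paper: both verify $X_{0,1}=M_{p,m}(r(p-1)+p)>0$ via $r(p-1)+p\le r(p-1)+2(p-1)\le m(p-1)$ (using $m\ge r+2$) and the first entry of $Y_0$ via $r(p-1)\ge 1$. Nothing further is needed.
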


\begin{proof}
Indeed, $m \ge r+2$ implies $0 \le r(p-1)+p < r(p-1)+2(p-1) \le
m(p-1)$, which implies $X_{0,1} = M_{p,m}(r(p-1)+p) > 0$.

On the other hand, $r(p-1)-1 \ge 0$ implies $r(p-1)-1+m-1 \ge m-1$,
which implies $\binom{r(p-1)-1+m-1}{m-1} > 0$.
\end{proof}

Moreover, both $X_0$ and $Y_0$ have all positive entries
for $p \gg 0$. In fact we can be more precise. 

\begin{Lma}
\label{pos}
If $p \geq m-r$, then both $X_0$ and $Y_0$ have all positive entries.
\end{Lma}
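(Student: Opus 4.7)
The plan is to verify positivity of the two vectors separately, reducing each to a transparent arithmetic inequality in $p,r,m$ and showing that the hypothesis $p \ge m-r$ (together with the standing assumption $m \ge r+2$) is exactly what is needed for the binding (worst) entry in each vector.

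For $X_0$: the entries are $X_{0,i} = M_{p,m}(r(p-1)+ip)$ for $i \in \{1,\ldots,m-r-1\}$, and from the Poincar\'e series displayed in the introduction we have $M_{p,m}(j) > 0$ iff $0 \le j \le m(p-1)$. The lower bound is automatic, so it suffices to verify $r(p-1)+ip \le m(p-1)$, i.e.\ $ip \le (m-r)(p-1)$. Since the left-hand side is increasing in $i$, the binding case is $i = m-r-1$, which rearranges to $(m-1)p - r \le mp - m$, equivalently $p \ge m-r$. For smaller $i$ there is strictly more slack, so all entries of $X_0$ are positive under the hypothesis.

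For $Y_0$: the entries are $\binom{r(p-1)-j+m-1}{m-1}$ for $j \in \{1,\ldots,m-r-1\}$, and such a binomial coefficient is positive precisely when $r(p-1)-j+m-1 \ge m-1$, i.e.\ $r(p-1) \ge j$. The binding case is $j = m-r-1$, where the required inequality is $r(p-1) \ge m-r-1$. If $r = 1$ this reduces to $p \ge m-1 = m-r$, which is exactly the hypothesis. If $r \ge 2$, then combining $p \ge m-r$ with $m \ge r+2$ gives $r(p-1) \ge r(m-r-1)$, and the inequality $r(m-r-1) \ge m-r-1$ is equivalent to $(r-1)(m-r-1) \ge 0$, which holds. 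Hence every entry of $Y_0$ is positive as well.

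There is no real obstacle here: the statement is an elementary check, and the only mild subtlety is making sure the worst entry in each of $X_0$ and $Y_0$ really is pinned down by the extreme index (largest $i$ for $X_0$, largest $j$ for $Y_0$), which follows from monotonicity of $ip$ in $i$ and of $\binom{r(p-1)-j+m-1}{m-1}$ in $-j$. This also exhibits the threshold $p = m-r$ as sharp, since at $i = m-r-1$ one has $M_{p,m}(r(p-1)+ip) = 0$ as soon as $p < m-r$.
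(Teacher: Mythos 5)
Your proposal is correct and follows essentially the same route as the paper: for $X_0$ one checks $r(p-1)+ip \le m(p-1)$ using $p \ge m-r$ (worst case $i=m-r-1$), and for $Y_0$ one checks $r(p-1)-j+m-1 \ge m-1$ at $j=m-r-1$, where your inequality $(r-1)(m-r-1)\ge 0$ is exactly the paper's factorization $r(m-r)-m+1=-(r-1)(r-m+1)\ge 0$. The added remark on sharpness of the threshold $p=m-r$ is a correct bonus but not needed.
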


\begin{proof}
If $p \geq m-r$, then $0 \leq r(p-1)+ip \leq  m(p-1)$ and hence  
$M_{p,m}(r(p-1)+ip) > 0$, for all $i =1, \dotsc, m-r-1$. 

On the other hand, note that $r(m-r)- m+1  = 
-(r-1)(r-m+1) \ge 0$ for all $r = 1, \dotsc, m-2$.  
Consequently, if $p \ge m-r$ then for all $i = 1, \dotsc, m-r-1$,
\begin{align*}
r(p-1)-i+m-1 &\ge r(p-1)-(m-r-1) +m-1 \\
&=(rp-m+1) + m-1 \ge (r(m-r) - m +1) + m-1 \ge m-1, 
\end{align*}
which leads to $\abs{Y_0} > 0$. 
\end{proof}

\begin{Prop} \label{prop:bounds}
We have
$$c_e \leq (m-r-1)^{e-1}  \cdot \Vert Y_0 \Vert 
\cdot \Vert U \Vert ^{e-2} \cdot \Vert X_0 \Vert$$
and
$$(m-r-1)^{e-1}  \cdot \abs{Y_0}  \cdot \abs{U}^{e-2} \cdot \abs{X_0}
\leq c_e.$$
(In fact $(m-r-1)^{e-3}  \cdot \Abs{Y_0}  \cdot \abs{U}^{e-2} \cdot \Abs{X_0}
\leq c_e$.)
Therefore we have that
$$ (m-r-1) \abs{U} \leq \cx (T(V_r(\sR)) \leq (m-r-1)\Vert U \Vert$$
for $p \gg 0$, where $\sR = R[x_1,\,\dotsc,\,x_m]$. 
\end{Prop}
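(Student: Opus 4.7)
The proof strategy exploits the explicit factorization $c_e = Y_0 \cdot U^{e-2} \cdot X_0$ from Discussion~\ref{dis:m,r,p}, viewing $c_e$ as a $1 \times 1$ scalar produced by a chain of nonnegative matrix products. The main tool is the bilinear matrix-norm inequality stated just before the Proposition: for nonnegative matrices $A$ of size $l \times t$ and $B$ of size $t \times s$,
\[
t \abs A \cdot \abs B \le \abs{A \cdot B} \le \Vert A \cdot B \Vert \le t \Vert A \Vert \cdot \Vert B \Vert,
\]
which I would apply iteratively along the product $Y_0 \cdot U \cdots U \cdot X_0$.

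By an easy induction on $k \ge 1$, one obtains
\[
(m-r-1)^{k-1} \abs U ^k \le \abs{U^k} \le \Vert U^k \Vert \le (m-r-1)^{k-1} \Vert U \Vert^k.
\]
Taking $k = e-2$ and then multiplying on the left by $Y_0$ and on the right by $X_0$---each such outer multiplication contributing a further factor of $m-r-1$ (the inner dimension) from the same inequality---yields both the upper bound $c_e \le (m-r-1)^{e-1} \Vert Y_0 \Vert \Vert U \Vert^{e-2} \Vert X_0 \Vert$ and the lower bound $(m-r-1)^{e-1} \abs{Y_0} \abs U^{e-2} \abs{X_0} \le c_e$. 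For the sharper parenthetical lower bound $(m-r-1)^{e-3} \Vert Y_0 \Vert \abs U^{e-2} \Vert X_0 \Vert \le c_e$, I would use the direct observation that for any nonnegative row vector $v$, nonnegative matrix $M$ and nonnegative column vector $w$ of compatible sizes, $v M w \ge \Vert v \Vert \cdot \abs M \cdot \Vert w \Vert$ by isolating the single term indexed by the positions of the maxima of $v$ and $w$; I then apply this with $v = Y_0$, $M = U^{e-2}$ and $w = X_0$.

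For the asymptotic complexity bounds, the upper estimate on $c_e$ rewrites as
\[
c_e \le C \cdot \bigl((m-r-1) \Vert U \Vert\bigr)^e,
\]
where $C$ depends on $m, r, p$ but not on $e$, which yields $\cx(T(V_r(\sR))) \le (m-r-1) \Vert U \Vert$ directly from the definition of complexity. The analogous lower bound $\cx(T(V_r(\sR))) \ge (m-r-1)\abs U$ is precisely where the hypothesis $p \gg 0$ enters the argument: the lower bound on $c_e$ is meaningful only when $\abs{Y_0}, \abs{U}, \abs{X_0}$ are all strictly positive, and that is exactly what Lemmas~\ref{abs} and~\ref{pos} guarantee once $p \ge m-r$. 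The main obstacle is thus not algebraic but a bookkeeping one: one must track that the matrices involved are not merely nonnegative but have strictly positive minimum entries, so that the geometric growth rate $(m-r-1)\abs U$ is nontrivial and can be extracted by comparing $c_e^{1/e}$ to a constant as $e \to \infty$.
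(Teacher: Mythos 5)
Your proposal is correct and follows exactly the route the paper intends (the paper leaves this proof to the reader): factor $c_e = Y_0\cdot U^{e-2}\cdot X_0$, iterate the lemma $t\abs{A}\abs{B}\le\abs{AB}\le\Vert AB\Vert\le t\Vert A\Vert\Vert B\Vert$ over the $e-1$ multiplications to get both bounds, and invoke Lemma~\ref{abs} and Lemma~\ref{pos} so that for $p\gg 0$ the quantities $\abs{U}$, $\abs{Y_0}$, $\abs{X_0}$ are strictly positive, which makes the lower bound yield $\cx(T(V_r(\sR)))\ge (m-r-1)\abs{U}$ while the upper bound gives $\cx(T(V_r(\sR)))\le (m-r-1)\Vert U\Vert$. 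The only (shared) caveat is that your argument for the sharper parenthetical bound, via $\abs{U^{e-2}}\ge (m-r-1)^{e-3}\abs{U}^{e-2}$, is meaningful for $e\ge 3$, which is all that matters asymptotically.
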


\begin{Cor} \label{cor:limit}
Let $\sR = R[x_1,\,\dotsc,\,x_m]$. If $p \gg 0$, then
$$ (m-r-1)\binom{m-1+p-3}{m-1}  \leq \cx (T(V_r(\sR)) \leq 
(m-r-1) \binom{m-1+ \lceil \frac{m(p-1)}{2} \rceil}{m-1}$$ 
and therefore
$\lim_{p \to \infty} \log_p\cx (T(V_r(\sR)) =  m-1$.
\end{Cor}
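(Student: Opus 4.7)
The plan is to assemble the bound chain directly from the two preceding results, and then extract the limit by a simple asymptotic analysis of binomial coefficients.

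First, I would invoke Proposition~\ref{prop:bounds}, which already yields
\[
(m-r-1)\,\abs{U} \;\le\; \cx(T(V_r(\sR))) \;\le\; (m-r-1)\,\Vert U\Vert
\]
for all sufficiently large $p$, where $U = U(p,r,m)$ is the determining matrix. I would then substitute the two-sided bound from Lemma~\ref{abs}, namely $\binom{m-1+p-3}{m-1} \le \abs{U}$ and $\Vert U\Vert \le \binom{m-1+\lceil m(p-1)/2\rceil}{m-1}$, and concatenate the inequalities. This gives precisely the claimed double inequality for $\cx(T(V_r(\sR)))$ once $p$ is large enough (specifically, large enough that both Proposition~\ref{prop:bounds} and Lemma~\ref{abs} apply).

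For the limit statement, I would take $\log_p$ of the displayed double inequality and invoke the squeeze theorem. The key observation is that for any real constants $\alpha > 0$ and $\beta$, the expression $\binom{\alpha p + \beta}{m-1}$ is a polynomial in $p$ of degree exactly $m-1$ (with leading coefficient $\alpha^{m-1}/(m-1)!$); indeed,
\[
\binom{\alpha p + \beta}{m-1} = \frac{(\alpha p + \beta)(\alpha p + \beta -1)\cdots(\alpha p + \beta - m+2)}{(m-1)!}.
\]
For any polynomial $P(p)$ of degree $d$ with positive leading coefficient, one has $\log_p P(p) \to d$ as $p \to \infty$, since $P(p) = Cp^d(1+o(1))$ gives $\log_p P(p) = d + \log_p C + o(1) \to d$. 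Applying this with $d = m-1$ to both the lower bound $(m-r-1)\binom{m-1+p-3}{m-1}$ (where $\alpha=1$) and the upper bound $(m-r-1)\binom{m-1+\lceil m(p-1)/2\rceil}{m-1}$ (where $\alpha=m/2$), and noting that the constant factor $(m-r-1)$ contributes $\log_p(m-r-1) \to 0$, both outer logarithms approach $m-1$. The squeeze theorem then forces $\log_p\cx(T(V_r(\sR))) \to m-1$.

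No step here looks genuinely hard: Proposition~\ref{prop:bounds} does the heavy lifting by reducing complexity to entry bounds for $U$, Lemma~\ref{abs} bounds those entries, and the limit is a routine asymptotic. The only point requiring mild care is verifying that the upper bound in Lemma~\ref{abs} really is a polynomial of the same degree $m-1$ as the lower bound (so that the two logs tend to the same value); this is immediate once one expands $\lceil m(p-1)/2\rceil$ as $\frac{m}{2}p + O(1)$ and applies the polynomial expansion of the binomial coefficient above.
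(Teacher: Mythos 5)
Your proof is correct and follows exactly the route the paper intends: the corollary is stated immediately after Proposition~\ref{prop:bounds} and Lemma~\ref{abs} precisely so that the double inequality falls out by concatenation, and the limit then follows from the degree-$(m-1)$ polynomial growth of both binomial bounds under $\log_p$, as you argue. Nothing is missing.
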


This corollary motivates the definition of Frobenius complexity in characteristic zero, which is given in Section 4, see Definition~\ref{FC0}.


\subsection{Perron-Frobenius} \label{subsec:pf}
We would like to summarize a few things
about square matrices with positive real entries. Any such matrix
admits a real positive eigenvalue $\lambda$ such that all other
eigenvalues have absolute value less than $ \lambda$. We will refer to
this eigenvalue as the Perron root or Perron-Frobenius eigenvalue of
the matrix. This eigenvalue is a simple root of the characteristic
polynomial of the matrix. Moreover, an eigenvector for
$\lambda$ either has all entries positive or has all entries
negative. See \cite{Pe} and \cite{Fr}.

Let $p \gg 0$. Since $U$ has only positive entries by
Lemma~\ref{abs}, let $\lambda$ be the Perron-Frobenius eigenvalue for $U$.
There exists an invertible matrix $P$ such that 
$$U = P D P^{-1}$$ 
where $D$ is the Jordan canonical form of $U$. (We may also take $D$
to be the rational canonical form of $U$ over $\mathbb R$ if we prefer
to stay within $\mathbb R$.) 
Without loss of generality, the left upper
corner of $D$ is $\lambda$ (thus all the other entries of the first
row or first column are $0$); that is,
\[
D = \begin{bmatrix}
\lambda & 0 \\
0 & D_1
\end{bmatrix}_{(m-r-1) \times (m-r-1)}
\]
with $D_1$ being an $(m-r-2) \times (m-r-2)$ matrix whose eigenvalues
are all less than $\lambda$ in absolute value. 
Hence the first column (row) of $P$
($P^{-1}$) is an eigenvector of $U$ ($U^T$) for $\lambda$.  
Thus, without loss of generality, we may assume that the first column of
$P$ and (consequently) the first row of $P^{-1}$ have all positive entries.  


Lastly, since both $Y_0$ and $X_0$ are non-zero, the first entries of
both $Y_0 P$ and $P^{-1} X_0$ are positive. 
Write $Y_0 P = [a, \ A]$ and $P^{-1} X_0 = [b, \ B]^T$ in block form.
Now the fact that $\lambda$ is the largest eigenvalue in absolute
value implies that 
\[
c_e = Y_0 U^{e-2} X_0 = (Y_0 P) D^{e-2} (P^{-1} X_0) 
= ab\lambda^{e-2} + AD_1^{e-2}B
=ab\lambda^{e-2}+o(\lambda^{e}). 
\] 
Thus 
\[
\cx(T(V_r(\sR)))= \lambda. 
\]
(The above argument applies as long as $p,\,m,\,r$ are such that
$U$ is all positive, since $X_0$ and $Y_0$ are always non-zero.)  

\section{Frobenius complexity of determinantal rings} 

In this section, we combine what we have obtained to derive results on
the Frobenius complexity of determinantal rings. 
In particular, we translate the results on $T(V_r(R[x_1,\dotsc,x_m]))$ to
$S_{m,n}$ with $m > n \ge 2$.

\begin{Thm}\label{thm:f-com-det}
Let $K$, $S_{m,n}$ and $\sR_{m,n}$ be as in Section~\ref{sec:det} 
(cf.~Definition~\ref{def:segre-complete}) with $m > n \ge 2$. 
Further assume that $K$ is a field of prime characteristic $p$. 
Let $E_{m,n}$ denote the injective hull of the residue field of $S_{m,n}$. 

\begin{enumerate}
\item The ring of Frobenius operators of $S_{m,n}$ (i.e.,
  $\sF(E_{m,n})$) is never finitely generated over $\sF_0(E_{m,n})$.
\item When $n = 2$, we have $\cx_F(S_{m,2}) = \log_p\binom{p+m-2}{m-1}$.
\item We have $\lim_{p \to   \infty}\cx_F(S_{m,n}) = m-1$. 
\item For $p \gg0$ or whenever the determining matrix $U=U(p, m, m-n)$
has all positive entries, we have $\cx_F(S_{m,n}) = \log_p (\lambda)$, 
in which $\lambda$ is the Perron root for $U$.  
\end{enumerate}
\end{Thm}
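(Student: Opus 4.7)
The plan is to reduce each of the four claims to facts already established in Sections~\ref{sec:det},~2, and~3 about the auxiliary ring $T := T(V_{m-n}(K[x_1,\dotsc,x_m]))$, and then invoke Theorem~\ref{thm:segre}(3) as the bridge. Setting $r := m-n$, the hypothesis $m > n \ge 2$ translates into $1 \le r \le m-2$, so in particular $r < m-1$ and $m-r-1 = n-1 \ge 1$; these are precisely the inequalities needed to put us in the non-degenerate regime of Theorems~\ref{thm:mr},~\ref{thm:m=r+2}, Corollary~\ref{cor:limit}, and the Perron--Frobenius discussion.

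Parts~(2),~(3), and~(4) will then be essentially one-line translations. For part~(2), the condition $n=2$ becomes $m = r+2$, so Theorem~\ref{thm:m=r+2} gives $\cx(T) = \binom{p+m-2}{m-1}$, and $\cx_F(S_{m,2}) = \log_p \cx(T)$ by Theorem~\ref{thm:segre}(3) yields the asserted formula. For part~(3), Corollary~\ref{cor:limit} gives $\lim_{p \to \infty}\log_p\cx(T) = m-1$, which transfers directly through Theorem~\ref{thm:segre}(3) to $\lim_{p \to \infty}\cx_F(S_{m,n}) = m-1$. For part~(4), once the determining matrix $U = U(p,r,m)$ has strictly positive entries (which happens for $p \gg 0$ by Lemma~\ref{abs}), the Perron--Frobenius argument in Subsection~\ref{subsec:pf} identifies $\cx(T)$ with the Perron root $\lambda$, so Theorem~\ref{thm:segre}(3) gives $\cx_F(S_{m,n}) = \log_p \lambda$.

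The only part with genuine content is~(1), and even it is largely a bookkeeping argument. I would proceed as follows: since $r < m-1$, Theorem~\ref{thm:mr}(2) asserts $c_e(T) > 0$ for every $e \ge 1$, so $T$ fails to be finitely generated over $T_0$. By Theorem~\ref{thm:segre}(2), $T(\sR_{m,n})$ has the \emph{same} complexity sequence, so $c_e(T(\sR_{m,n})) > 0$ for every $e \ge 1$ as well. Finally, the graded identification of $\sF(E_{m,n})$ with (the opposite of) $T(\sR_{m,n})$ furnished by \cite[Theorem~3.3]{KSSZ}, which underlies Theorem~\ref{thm:segre}(3), respects degrees and hence complexity sequences; combined with Corollary~\ref{interpretation} applied to the $R$-skew algebra $\sF(E_{m,n})$, this forces $\sF(E_{m,n})$ to acquire new minimal generators in every positive degree, so it cannot be finitely generated over $\sF_0(E_{m,n})$.

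The only subtle point to check — and the one potential obstacle worth naming — is that the chain of identifications in Theorem~\ref{thm:segre} preserves the complexity sequence termwise and not merely its asymptotic growth rate; otherwise part~(1) would not follow from ``$c_e(T) > 0$ for every $e$''. This is already built into Theorem~\ref{thm:segre}(1),(2) (via the two nearly onto homomorphisms in Proposition~\ref{prop:nearly-onto} and Corollary~\ref{cor:nearly-onto} applied in both directions) and into the KSSZ identification used in Theorem~\ref{thm:segre}(3), so no new machinery is needed.
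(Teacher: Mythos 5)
Your proposal is correct and follows essentially the same route as the paper: reduce to $T(V_{m-n}(K[x_1,\dotsc,x_m]))$ with $r=m-n\le m-2$, then invoke Theorem~\ref{thm:mr}(2) for (1), Theorem~\ref{thm:m=r+2} with Theorem~\ref{thm:segre}(3) for (2), Corollary~\ref{cor:limit} for (3), and the Perron--Frobenius discussion of Subsection~\ref{subsec:pf} for (4). Your extra care in (1) about transferring the complexity sequence termwise through Theorem~\ref{thm:segre}(2) and the KSSZ identification is a slightly more explicit version of the paper's brief citation, not a different argument.
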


\begin{proof}
(1) Since $m-n \le m-2$, we see that $T(V_{m-n}(K[x_1,\dotsc,x_{m}]))$ 
is not finitely generated over $T_0(V_{m-n}(K[x_1,\dotsc,x_{m}]))$ by 
Theorem~\ref{thm:mr}(2). 
Thus $\sF(E_{m,n})$) is not finitely generated over $\sF_0(E_{m,n})$
by Theorem~\ref{thm:segre}(1).

(2) By Theorem~\ref{thm:segre}(3) and Theorem~\ref{thm:m=r+2}, 
\[
\cx_F(S_{m,2}) = \log_p\cx(T(V_{m-2}(K[x_1,\dotsc, x_{m}])))
= \log_p\binom{p+m-2}{m-1}.
\qedhere
\]

(3) This follows from Corollary~\ref{cor:limit}.

(4) This is a straightforward consequence of the discution in
Subsection~\ref{subsec:pf}.
\end{proof}

\begin{Rem} We like to point out the following:
\begin{enumerate}
\item Also note that, for  every $m > 2$,
\[
\lim_{e \to \infty} c_e(\sF(E_{m,2})) 
= \lim_{e \to \infty} c_e(T(V_{m-2}(K[x_1, \dotsc,x_m]))) = \infty.
\]
\item Moreover, there exists an onto (hence nearly onto) graded ring
  homomorphism from 
$T(V_{m-n}(K[x_1, \dotsc,x_m]))$ to $T(V_{m-n}(K[x_1,\dotsc,x_{m-n+2}]))$. 
Thus by Corollary~\ref{cor:nearly-onto}, 
\[
c_e(T(V_{m-n}(K[x_1, \dotsc,x_m]))) 
\ge c_e(T(V_{m-n}(K[x_1,\dotsc,x_{m-n+2}])))
\] 
for all $ e \ge 0$. 
Hence $c_e(\sF(E_{m,n})) \ge c_e(\sF(E_{m-n+2,2}))$ for all $ e \ge 0$
and consequently
\[
\lim_{e \to \infty} c_e(\sF(E_{m,n})) = \infty
\] 
for all $m > n \ge 2$. 
\end{enumerate}
\end{Rem}

\subsection{Example}

We will illustrate our method with a concrete example. We are going to
use freely the notations established so far (especially the ones in
Section~\ref{sec:computing-ce}).  
 
Let $r=2,\, m=5$ and $K$ be a field of characteristic $p=3$. 
We are going to compute $c_e = c_e(T(V_2(K[x_1, \ldots, x_5])))$,
which in turn equals $c_e(\sF(E_{5,3}))$ by Theorem~\ref{thm:segre}.
As in Discussion~\ref{dis:m,r,p}, we have
\[
X_e= U^e\cdot X_0 \quad \text{for all} \quad e \geq 0,
\]
in which
\begin{align*}
X_e &= \begin{bmatrix}
X_{e,1} \\
X_{e,2}
\end{bmatrix}, \\
X_0 &= \begin{bmatrix}
X_{0,1} \\
X_{0,2}
\end{bmatrix}
=\begin{bmatrix}
M_{3,5}(7) \\
M_{3,5}(10)
\end{bmatrix}
=\begin{bmatrix}
30 \\
1
\end{bmatrix}, \\
U &= \begin{bmatrix}
M_{3,5}(6) & M_{3,5}(5) \\
M_{3,5}(9) & M_{3,5}(8)
\end{bmatrix}
= \begin{bmatrix}
45 & 51  \\
5  & 15
\end{bmatrix}.
\end{align*}

Note that $U$ has all positive entries and the eigenvalues of $U$ are
$2(15+ 2\sqrt{30})$ and $2(15-2\sqrt{30})$.  

At this point, we can apply the Theorem~\ref{thm:f-com-det}(4) above
directly and determine the Frobenius complexity of $S_{5,3}$ by
observing that the Perron root of $U$ is $2(15+2\sqrt{30})$.

However, for illustrative purposes let us compute $U^e$. This is
accomplished by diagonalizing $U$. 
%
%

Skipping the details, we get 
\[
U^e= \begin{bmatrix}
(15+4\sqrt{30})y_e  
+(-15+4\sqrt{30})z_e
& 51(y_e-z_e)\\
5(y_e-z_e) 
& (-15+4\sqrt{30})y_e +(15+4\sqrt{30})z_e
\end{bmatrix},
\]
in which
\[
y_e:=\frac{1}{\sqrt{15}} \cdot 2^{-\frac{7}{2}+e} \cdot (15+2\sqrt{30})^e
\quad \text{and} \quad
z_e:=  \frac{1}{\sqrt{15}} \cdot 2^{-\frac{7}{2}+e}\cdot  (15-2\sqrt{30})^e.
\]
Thus, for $e \geq 0$, we obtain
\begin{align*}
X_{e, 1} &= 30( (15+4\sqrt{30})y_{e}
+(-15+4\sqrt{30})z_{e})  
+ 51(y_{e}+z_{e}),\\ 
X_{e,2} &= 150(y_{e}- z_{e}) +
(-15+4\sqrt{30})y_{e}  +
(15+4\sqrt{30})z_{e}.
\end{align*}

Lastly, for $e \geq 2$, we have (cf.~Discussion~\ref{dis:m,r,p})
\begin{equation*}\label{eq:c}
c_e = c_e(T(V_2(K[x_1, \ldots, x_5])))
= \binom{7}{4}X_{e-2, 1} + \binom{6}{4}X_{e-2, 2},
\end{equation*}
which allows us to compute
$c_e(T(V_2(K[x_1, \ldots, x_5])))$ which equals $c_e(\sF(E_{5,3}))$.

Therefore we are led to the following Proposition.
\begin{Prop}
When $p=3$, $\cx_F(S_{5,3}) = \log_3(2(15+2\sqrt{30}))$.
\end{Prop}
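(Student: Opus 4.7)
The statement should fall out immediately from Theorem~\ref{thm:f-com-det}(4), once the determining matrix $U(3,2,5)$ has been written down explicitly and its Perron root identified. My plan is to specialize the recipe of Discussion~\ref{dis:m,r,p} to the parameters $p=3$, $r = m-n = 2$, $m=5$, extract $U$ entrywise from the values $M_{3,5}(i)$, verify positivity, and then compute the dominant eigenvalue.

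First I would set up the determining matrix. With these parameters $U$ has size $(m-r-1)\times(m-r-1) = 2\times 2$ and entries $u_{ij} = M_{3,5}\big(r(p-1)+pi-j\big) = M_{3,5}(4+3i-j)$ for $i,j \in \{1,2\}$. Reading the needed values off the Poincar\'e polynomial $(1+t+t^2)^5$ yields exactly the matrix displayed in the example, namely
\[
U = \begin{bmatrix} M_{3,5}(6) & M_{3,5}(5) \\ M_{3,5}(9) & M_{3,5}(8) \end{bmatrix} = \begin{bmatrix} 45 & 51 \\ 5 & 15 \end{bmatrix}.
\]
Every entry is strictly positive, which is the hypothesis needed to invoke Theorem~\ref{thm:f-com-det}(4).

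Next I would determine the Perron root of $U$. Its characteristic polynomial is $t^2 - 60t + 420$, whose roots are $30 \pm \sqrt{480} = 2(15 \pm 2\sqrt{30})$. Both roots are positive real numbers (since $15 > 2\sqrt{30}$), and the larger one is $\lambda = 2(15 + 2\sqrt{30})$. By the Perron--Frobenius discussion in Subsection~\ref{subsec:pf}, $\lambda$ is the Perron eigenvalue of $U$.

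Finally, Theorem~\ref{thm:f-com-det}(4) gives $\cx_F(S_{5,3}) = \log_p(\lambda) = \log_3\bigl(2(15+2\sqrt{30})\bigr)$, which is the asserted identity. There is no real obstacle here: all of the heavy lifting (the reduction via Theorem~\ref{thm:segre}, the enumerative formula of Proposition~\ref{prop:c_e}, the matrix formulation in Discussion~\ref{dis:m,r,p}, and the Perron--Frobenius analysis) has been carried out in generality already, so the only substantive step is the elementary computation of the two eigenvalues of a $2\times 2$ matrix and the verification that the larger one is positive.
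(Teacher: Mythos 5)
Your proposal is correct and follows essentially the same route as the paper: the paper's example likewise computes the determining matrix $U=\begin{bmatrix}45 & 51\\ 5 & 15\end{bmatrix}$ from the values $M_{3,5}(i)$, notes its entries are positive with eigenvalues $2(15\pm 2\sqrt{30})$, and invokes Theorem~\ref{thm:f-com-det}(4) to conclude $\cx_F(S_{5,3})=\log_3\bigl(2(15+2\sqrt{30})\bigr)$. The paper's additional diagonalization of $U$ and the explicit formula for $c_e$ are only illustrative, so your shorter argument loses nothing.
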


At conclusion of the paper, we would like to introduce the definition
of the Frobenius complexity for rings of characteristic zero, which is
motivated by Corollary~\ref{cor:limit} and Theorem~\ref{thm:f-com-det}(3). 
As the definition involves rings that may not be local, we first
extend our Definition~\ref{def-FCX} by defining the Frobenius complexity
of a (not necessarily local) ring $R$ of prime characteristic $p$ as
$\cx_F(R) : = \log_p(\cx(\sC(R)))$. (When $\ringR$ is F-finite
complete local, $\sC(R)$ and $\sF(E(k))$ are opposite as graded rings;
so $\cx(\sC(R)) = \cx(\sF(E(k)))$ and we do have an extension of the definition.)

\begin{Def} \label{FC0}
Let $R$ be a ring (of characteristic zero) such that $R/pR \neq 0$ for
almost all prime number $p$.    
When the limit $\lim_{p \to \infty} \cx_F(R/pR)$ exists, we call it 
\emph{the Frobenius complexity} of $R$. 
\end{Def}

It is natural to ask under what conditions, if any at all, the
Frobenius complexity exists. The case of 
$R = \mathbb Z[X_1,\,\dotsc,\,X_n]/I$ and 
$R = \mathbb Z[[X_1,\,\dotsc,\,X_n]]/I$ are particularly interesting. 
If $R$ is a finitely generated algebra over a field $k$ of characteristic
zero, we could descend $R$ to a finitely generated $A$-algebra
$R_A$ (where $A$ is a subring of $k$ that is finitely generated over
$\mathbb Z$ containing the defining data of $R$) and study the the
Frobenius complexity of $R_A$.

\end{document}